\newtheorem{thm}{Theorem}[section]
\newtheorem{coro}[thm]{Corollary}
\newtheorem{prop}[thm]{Proposition}
\theoremstyle{definition}
\newtheorem{defi}[thm]{Definition}
\newtheorem{remark}[thm]{Remark}
\begin{document}

\title{Green functions and higher Deligne--Lusztig characters}

\author{Zhe Chen}

\address{Department of Mathematics, Shantou University, Shantou, China}

\email{zhechencz@gmail.com}

\begin{abstract}
We give a generalisation of the character formula of Deligne--Lusztig representations from the finite field case to the truncated formal power series case. Motivated by this generalisation, we give a definition of Green functions for these local rings, and we prove some basic properties along the lines of the finite field case, like a summation formula. Among the applications we show that the higher Deligne--Lusztig characters and G\'erardin's characters agree at regular semisimple elements. We also derive a generalisation of Braverman and Kazhdan's result on gamma functions for Deligne--Lusztig characters, with a more elementary argument.
\end{abstract}

\maketitle

\tableofcontents

\section{Introduction and notations}\label{section:Intro}

The family of representations introduced in Deligne and Lusztig's work \cite{DL1976} plays a crucial role in the representation theory of reductive groups over finite fields, since 1976. To compute the characters of these representations, there are roughly two steps involved: The first one is a formula expressing these characters in terms of Green functions of smaller reductive groups, and the second one is to compute the Green functions using generalised Springer theory and character sheaves (Lusztig--Shoji algorithm). 

\vspace{2mm} In this paper we give a study on the generalisation of the above first step, from finite fields to discrete valuation rings. There are four perspectives. In the first part, we prove a character formula for reductive groups over (quotients of) complete discrete valuation rings of positive characteristics, which expresses the characters of higher Deligne--Lusztig representations in terms of the traces of certain unipotent elements; see Theorem~\ref{theorem: character formula} and Theorem~\ref{theorem: character formula 2}. This generalises the character formula from the finite field case in a natural way. In the second part, motivated by this generalisation, we give a definition of Green functions in this ring setting; see Definition~\ref{definition: Green function}. Similar to the finite field case, these functions are defined on unipotent elements, and are independent of the choice of characters of the maximal torus (which are parameters of higher Deligne--Lusztig characters). We show that they enjoy some nice properties, such as a summation formula; see Corollary~\ref{coro: integer values of Green functions}, Corollary~\ref{coro: a product property}, Corollary~\ref{coro: a product property in inner product form}, and Proposition~\ref{prop: summation formula}. In the third part we focus on regular semisimple elements. Using the Green functions we establish a formula of higher Deligne--Lusztig characters on regular semisimple elements; see Theorem~\ref{thm: evaluation at regular semisimple elements}. This formula is independent of an integer parameter involved, and as an immediate application, we deduce that, at regular semisimple elements, the values of higher Deligne--Lusztig characters are the same with that of G\'erardin's characters (constructed in \cite{Gerardin1975SeriesDiscretes}), thus obtaining a variation of a prediction of Lusztig \cite{Lusztig2004RepsFinRings}; see Corollary~\ref{coro: coincidence at even levels} and Remark~\ref{remark: coincidence at odd levels}. In the last section, we study the gamma functions originally introduced by Braverman and Kazhdan in \cite{Braverman_Kazhdan_2000_GammaFunc} for general linear groups over finite fields. In the case of higher Deligne--Lusztig characters with $p$-constant parameters, we use the summation formula to prove that the associated gamma function is the same as the gamma function of the corresponding finite tori (see Proposition~\ref{prop: gamma function}); this gives a generalisation of \cite[9.3]{Braverman_Kazhdan_2000_GammaFunc} and \cite[5.3]{Braverman_Kazhdan_2000_gammaSh} for higher Deligne--Lusztig characters, by a different method.

\vspace{2mm} In the below we introduce some notations and describe some results with more details.

\vspace{2mm} Let $\mathcal{O}$ be a complete discrete valuation ring with a finite residue field $\mathbb{F}_q$, let $\pi$ be a fixed uniformiser of $\mathcal{O}$, and let $\mathbb{G}$ be a connected reductive group over $\mathcal{O}_r:=\mathcal{O}/\pi^r$ where $r$ is a fixed arbitrary positive integer. We want to study the complex smooth representations of reductive groups over $\mathcal{O}$, or equivalently, the complex representations of $\mathbb{G}(\mathcal{O}_r)$ (for all $r\in\mathbb{Z}_{>0}$). One approach for studying these representations for a general $r\geq 1$, on which this paper is based, is a geometric theory proposed by Lusztig in \cite{Lusztig1979SomeRemarks} (with some missing proofs given in \cite{Lusztig2004RepsFinRings} for $\mathrm{char}(\mathcal{O})=0$, which were generalised in \cite{Sta2009Unramified} for all characteristics using the Greenberg functor technique). This theory generalises the work in the $r=1$ case in \cite{DL1976}, so we call it the higher Deligne--Lusztig theory. In this paper we will be only working with $\mathrm{char}(\mathcal{O})>0$; in particular, we put $\mathcal{O}=\mathbb{F}_q[[\pi]]$.

\vspace{2mm} We recall the basic settings in \cite{Lusztig2004RepsFinRings}. Write $\mathbf{G}:=\mathbb{G} \times_{\mathrm{Spec}~\mathcal{O}_r}\mathrm{Spec}~\mathcal{O}^{\mathrm{ur}}_r$, where $\mathcal{O}^{\mathrm{ur}}:=\overline{\mathbb{F}}_q[[\pi]]$ is a maximal unramified extension of $\mathcal{O}$ and $\mathcal{O}^{\mathrm{ur}}_r:=\mathcal{O}^{\mathrm{ur}}/\pi^r$. Let $\mathbf{B}=\mathbf{T}\ltimes\mathbf{U}$ be the Levi decomposition of a Borel subgroup of $\mathbf{G}$. By Weil restriction, we can view $\mathbf{G}(\mathcal{O}^{\mathrm{ur}}_r)$, $\mathbf{B}(\mathcal{O}^{\mathrm{ur}}_r)$, $\mathbf{T}(\mathcal{O}^{\mathrm{ur}}_r)$, and $\mathbf{U}(\mathcal{O}^{\mathrm{ur}}_r)$ as the $\overline{\mathbb{F}}_q$-points of some algebraic groups $G$, $B$, $T$, and $U$, respectively, over $\overline{\mathbb{F}}_q$. The Frobenius map on $\overline{\mathbb{F}}_q/\mathbb{F}_q$ induces a geometric Frobenius endomorphism on $G$, such that $G^F\cong \mathbb{G}(\mathcal{O}_r)$. We only consider the case that $F(T)\subseteq T$. Let $L$ be the Lang isogeny associated to $F$, namely, $L(g)=g^{-1} F(g)$ for all $g\in G$. We fix a rational prime integer $\ell \nmid q$; we will be working with the $\overline{\mathbb{Q}}_{\ell}$-representations.

\vspace{2mm} The variety $L^{-1}(FU)\subseteq G$ admits a left $G^F$-action and a right $T^F$-action in a natural way. These actions induce a $(G^F,T^F)$-bimodule structure on the compactly supported $\ell$-adic cohomology groups $H_c^i(L^{-1}(FU),\overline{\mathbb{Q}}_{\ell})$. Let $H_c^i(L^{-1}(FU),\overline{\mathbb{Q}}_{\ell})_{\theta}$ be the isotypical space for a given $\theta\in\widehat{T^F}:=\mathrm{Hom}(T^F,\overline{\mathbb{Q}}_{\ell})$, then 
$$R_{T,U}^{\theta}:=\sum_{i} (-1)^i H_c^i(L^{-1}(FU),\overline{\mathbb{Q}}_{\ell})_{\theta}$$
is a virtual representation of $G^F$, referred to as a higher Deligne--Lusztig representation.

\vspace{2mm} Denote $G$ by $G_r$, then for each $i\in[0,r]\cap\mathbb{Z}$, there is a morphism of algebraic groups $\rho_{r,i}\colon G_r\rightarrow G_i$, called the reduction map, induced by the reduction modulo $\pi^i$. The morphism $\rho_{r,i}$ is surjective by the smoothness of $\mathbf{G}$; we denote its kernel, a normal closed subgroup of $G$, by $G^i$. In particular, we have $G^0=G$ (do not mix it with the identity component $G^o$). Similar notation applies to the closed subgroups of $G$ (like $B$, $U$, and $T$). 

\vspace{2mm} The above objects lead to the representation
$$R_{T,U,b}^{\theta}:=\sum_{i} (-1)^i H_c^i(L^{-1}(FU^{b,r-b}),\overline{\mathbb{Q}}_{\ell})_{\theta},$$
where $b\in [0,r]\cap\mathbb{Z}$ and $U^{b,r-b}:=U^b(U^{-})^{r-b}$ (here $U^-$ denotes the algebraic group corresponding to the opposite of $\mathbf{U}$). This construction was first studied in \cite{Chen_2017_InnerProduct}; note that it naturally generalises the representations studied in \cite{Lusztig1979SomeRemarks}, \cite{Lusztig2004RepsFinRings}, \cite{Sta2009Unramified}, \cite{Chen_2016_GenericCharSh}, and \cite{ChenStasinski_2016_algebraisation} (as clearly $R_{T,U}^{\theta}=R_{T,U,r}^{\theta}$). The $R_{T,U,b}^{\theta}$'s are the basic objects in this paper. We prove that (see Theorem~\ref{theorem: character formula})
\begin{equation*}
\begin{split}
\mathrm{Tr}(g,R_{T,U,b}^{\theta})=
&\frac{1}{|T^F|}\cdot \frac{1}{|(\mathrm{Stab}_{G}(s)^o)^F|} \cdot  \sum_{    \left\lbrace \substack{ h\in G^F\  \text{s.t.} \\ s\in {^h(T_1)^F} } \right\rbrace    } \sum_{\tau\in {^h(T^1)}^F}         {\theta}(s^h\cdot \tau^h) \\
&  \cdot  \mathrm{Tr}\left((u,\tau^{-1})  \mid \sum_{i}(-1)^i\cdot H_c^i(\mathrm{Stab}_G(s)^o\cap L^{-1}( {^h FU^{r-b,b} }),\overline{\mathbb{Q}}_{\ell})  \right),
\end{split}
\end{equation*}
where $g=su\in G^F$ denotes the Jordan decomposition. This suggests a definition of two-variable Green functions $Q_{T,U,b}^{G}(-,-)$ defined on some unipotent elements (see Definition~\ref{definition: Green function}), which admit the following summation property (see Proposition~\ref{prop: summation formula})
$$\sum_{u\in (\mathcal{U}_{G})^F}\sum_{\tau\in ((T^1))^F} Q_{T,U,b}^{G}(u,\tau)=|G^F/T_1^F|.$$ 
These Green functions allow us to re-write the above character formula as (see Theorem~\ref{theorem: character formula 2})
\begin{equation*}
\mathrm{Tr}(g,R_{T,U,b}^{\theta})=\frac{1}{|(\mathrm{Stab}_{G}(s)^o)^F|} \cdot  \sum_{   \left\lbrace\substack{   h\in G^F\ \text{s.t.} \\  s\in {^h(T_1)^F}  } \right\rbrace   } \sum_{\tau\in {^h(T^1)^F} }         {\theta}(s^h\cdot \tau^h)\cdot   Q_{{^hT},{^hU}\cap\mathrm{Stab}_G(s)^o,b}^{\mathrm{Stab}_{G}(s)^o}(u,\tau^{-1}).
\end{equation*}
Using this formula we can evaluate the values of $\mathrm{Tr}(-,R_{T,U,b}^{\theta})$ at regular semisimple elements easily: (See Theorem~\ref{thm: evaluation at regular semisimple elements})
$$\mathrm{Tr}(s,R^{\theta}_{T,U,b})=\sum_{w\in W(T)^F}(^w\theta)(s^c),$$
from which we can derive some independence properties: One of them links $R_{T,U}^{\theta}$ to certain irreducible representations constructed by G\'erardin in \cite{Gerardin1975SeriesDiscretes} (see Remark~\ref{remark: Gerardin}, Corollary~\ref{coro: coincidence at even levels}, and Remark~\ref{remark: coincidence at odd levels}). In the final section we turn to the gamma functions $\gamma_{G^F}(\chi,\psi)$ defined with respect to characters $\chi$ of $G^F$ and $\overline{\mathbb{Q}}_{\ell}$-valued functions $\psi$ on $G^F$ (see Section~\ref{sec: gamma}); these gamma functions were introduced in \cite{Braverman_Kazhdan_2000_GammaFunc} and \cite{Braverman_Kazhdan_2000_gammaSh}, and studied based on properties of character sheaves. We show that (see Proposition~\ref{prop: gamma function})
$$\gamma_{G^F}\left(\mathrm{Tr}({R}_{T,U,b}^{\theta}),\psi\right)=\gamma_{(T_1)^F}(\theta,\psi|_{(T_1)^F}),$$
when $\theta$ and $\psi$ are $p$-constant.

\vspace{2mm} We record here some further notation and conventions. The set of roots of $\mathbf{T}$ will be denoted by $\Phi$; for $\alpha\in \Phi$, we let $\mathbf{U}_{\alpha}\subseteq\mathbf{U}$ be the corresponding root subgroup, and we let $U_{\alpha} \subseteq U$ be the corresponding algebraic group. There is an isomorphism of finite groups $N_G(T)/T\cong N_{G_1}(T_1)/T_1$ (see \cite[Section~2]{ChenStasinski_2016_algebraisation}), thus we will use $W(T)$ to denote the Weyl group of $\mathbf{T}$. We will use the conjugation notation $a^b={^{b^{-1}}a}=b^{-1}ab$ for suitable objects $a$ and $b$. For a character of a finite abelian group, we sometimes use the same notation for its representation space and itself.

\vspace{2mm}\noindent {\bf Acknowledgement.} The author thanks Alexander Stasinski for useful comments on an earlier version of this paper. This paper comes as a by-product of the author's study on Brou\'{e}--Michel's work \cite{Broue_Michel_blocs_CrelleJ}. During the preparation of this work, the author is partially supported by the STU funding NTF17021.

\section{The character formula}\label{section: character formula}

We will work with the general $R_{T,U,b}^{\theta}$ (see Section~\ref{section:Intro}) throughout this paper; a rewarding result for using this generality will be seen at Section~\ref{section: Regular semisimple elements}. 

\vspace{2mm} We want to prove the following character formula:

\begin{thm}\label{theorem: character formula}
Let $\mathfrak{R}_{T,U,b}^{\theta}$ be the character of $R_{T,U,b}^{\theta}$, then for $g\in G^F$ we have
\begin{equation*}
\begin{split}
\mathfrak{R}_{T,U,b}^{\theta}(g)=
&\frac{1}{|T^F|}\cdot \frac{1}{|(\mathrm{Stab}_{G}(s)^o)^F|} \cdot  \sum_{    \left\lbrace \substack{ h\in G^F\  \text{s.t.} \\ s\in {^h(T_1)^F} } \right\rbrace    } \sum_{\tau\in {^h(T^1)}^F}         {\theta}(s^h\cdot \tau^h) \\
&  \cdot  \mathrm{Tr}\left((u,\tau^{-1})  \mid \sum_{i}(-1)^i\cdot H_c^i(\mathrm{Stab}_G(s)^o\cap L^{-1}( {^h FU^{r-b,b} }),\overline{\mathbb{Q}}_{\ell})  \right),
\end{split}
\end{equation*}
where $g=su$ is the Jordan decomposition.
\end{thm}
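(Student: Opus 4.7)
My plan is to adapt Deligne and Lusztig's original proof of the character formula (\cite{DL1976}, §4) to the higher setting. The starting point is the isotypic projection
\[
\mathfrak{R}_{T,U,b}^\theta(g) = \frac{1}{|T^F|}\sum_{\tau\in T^F}\theta(\tau)\,\mathrm{Tr}\left((g,\tau^{-1}) \mid H_c^*(L^{-1}(FU^{b,r-b}))\right).
\]
The first step is a Lefschetz-style reduction: the commuting finite-order actions of $g$ on the left and of $\tau^{-1}$ on the right of the variety $X := L^{-1}(FU^{b,r-b})$ reduce the trace on $H_c^*(X)$ to cohomological contributions supported on the loci $\{x \in X : x^{-1}gx = \tau\}$, parametrised by conjugators modulo the relevant centralisers.

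Next I would exploit the Jordan decompositions simultaneously in $G^F$ and in $T^F$. Writing $g = su$ and factoring $\tau = \sigma\nu$ with $\sigma \in (T_1)^F$ semisimple and $\nu \in (T^1)^F$ topologically unipotent (a splitting available here because $\mathrm{char}(\mathcal{O}) > 0$ makes $T^1$ a pro-$p$ group), the identity $x^{-1}gx = \tau$ forces $x^{-1}sx = \sigma$ and $x^{-1}ux = \nu$. A nonzero contribution therefore requires $s$ to be $G^F$-conjugate to a semisimple element of $(T_1)^F$, so I would reindex the sum by pairs $(h,\tau)$ with $h \in G^F$ satisfying $s \in {}^h(T_1)^F$ (so that $\sigma = s^h$) and $\tau$ ranging over ${}^h(T^1)^F$: this produces the double sum in the statement, with $\theta(s^h\cdot\tau^h)$ coming from $\theta(\tau) = \theta(\sigma)\theta(\nu)$. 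For fixed $h$ I would parametrise the conjugators by writing $x = hy$ with $y$ in the appropriate centraliser, transporting the defining condition into $\mathrm{Stab}_G(s)^o \cap L^{-1}({}^h FU^{r-b,b})$; the swap $(b, r-b) \leftrightarrow (r-b, b)$ arises from the inversion step that converts the right $\tau^{-1}$-action into a left action on the centraliser variety, and restricting to the identity component lets Lang's theorem apply and produces the normalising factor $1/|(\mathrm{Stab}_G(s)^o)^F|$. Unwinding the remaining trace on this centraliser cohomology yields the right-hand side.

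The hard part will be the centraliser reduction in the $\mathcal{O}_r$-setting: one must verify that $\mathrm{Stab}_G(s)^o$ behaves like a connected reductive-type group over $\overline{\mathbb{F}}_q$ so that Lang's theorem applies and the conjugator parameterisation is clean, and that the intersection with $L^{-1}({}^h FU^{r-b,b})$ inherits the structure needed for its compactly supported $\ell$-adic cohomology. A secondary technical issue is carefully tracking the swap $(b, r-b) \mapsto (r-b, b)$ together with the inversion $\tau \mapsto \tau^{-1}$ through the conjugation step; with these in hand, the combinatorial normalisation $1/|T^F|$ and the indexing over $h$ modulo the stabiliser follow exactly as in the classical Deligne--Lusztig argument.
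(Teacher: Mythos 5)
Your proposal follows the same broad route as the paper: the bimodule/isotypic projection, a Deligne--Lusztig fixed-point reduction, simultaneous Jordan decompositions in $g$ and $\tau$, and a parametrisation by conjugators modulo centralisers. But two essential steps are wrong or missing. First, the ``Lefschetz-style reduction to the loci $\{x : x^{-1}gx=\tau\}$'' is not a valid step. Deligne--Lusztig's fixed-point theorem \cite[3.2]{DL1976} restricts only to the fixed locus of the \emph{semisimple} ($p'$-) part of the acting automorphism, namely to
\[
X^{(s,\sigma^{-1})}=\{x : x^{-1}sx=\sigma\},
\]
and then equates $\mathrm{Tr}\bigl((g,\tau^{-1})\mid H_c^*(X)\bigr)$ with $\mathrm{Tr}\bigl((u,\nu^{-1})\mid H_c^*(X^{(s,\sigma^{-1})})\bigr)$. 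This latter trace of the unipotent pair is a Lefschetz number, not a fixed-point count; $u$ may act with no fixed points at all on $X^{(s,\sigma^{-1})}$ and still contribute --- that contribution is exactly what becomes the Green function. So the deduction ``$x^{-1}gx=\tau$ forces $x^{-1}ux=\nu$'' is not available to you; only the semisimple equation $x^{-1}sx=\sigma$ is, and that alone is what produces the reindexing over $h\in G^F$ with $s\in{}^h(T_1)^F$.

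Second, you flag but do not prove the technical heart of the argument: the surjectivity of the multiplication map $\phi\colon(h,z)\mapsto hz$ from pairs with $h^{-1}sh=\sigma$, $z\in\mathrm{Stab}_G(\sigma)^o$, $L(z)\in FU^{r-b,b}$, onto the fixed locus. The delicate point is that for $x$ in the fixed locus one must show $L(x)$ lies not merely in $\mathrm{Stab}_G(\sigma)$ but in its \emph{identity component}, so that the Lang--Steinberg theorem applies and $x$ factors as $hz$. The paper does this by decomposing $L(x)=x_1x_2$ via $G=G_1\ltimes G^1$: the level-one part $x_1\in FU$ is unipotent and hence lies in $\mathrm{Stab}_{G_1}(\sigma)^o$ by classical theory, while $x_2\in\mathrm{Stab}_{G^1}(\sigma)$ lies in the identity component because an Iwahori-decomposition argument shows $\mathrm{Stab}_{G^1}(\sigma)$ is an affine space, hence connected. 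Without this connectedness input, the parametrisation of the fixed locus as a disjoint union over $h\in G^F/(\mathrm{Stab}_G(\sigma)^o)^F$ breaks down and the factor $1/|(\mathrm{Stab}_G(s)^o)^F|$ is not justified. (A minor additional point: your explanation of the appearance of $FU^{r-b,b}$ by ``converting the right $\tau^{-1}$-action to a left action'' does not match what actually happens --- the induced action on each $h$-indexed component of the fixed locus is still a right action $z\mapsto u^h\cdot z\cdot\nu^{-1}$.)
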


\begin{proof}
By Brou\'e's character formula on bimodule induction (see e.g.\ \cite[Chapter~4]{DM1991}) we have
\begin{equation*}
\mathfrak{R}_{T,U,b}^{\theta}(g)=\frac{1}{|T^F|}\cdot \sum_{t\in T^F}\theta(t^{-1})\cdot \mathrm{Tr}\left((g,t)  \mid \sum_{i}(-1)^iH_c^i(L^{-1}(FU^{r-b,b}),\overline{\mathbb{Q}}_{\ell})  \right).
\end{equation*}
So, applying Deligne and Lusztig's fixed point formula (see \cite[3]{DL1976}) to the RHS we get
\begin{equation*}
\mathfrak{R}_{T,U,b}^{\theta}(g)=\frac{1}{|T^F|}\cdot \sum_{t\in T^F}\theta(t^{-1})\cdot \mathrm{Tr}\left((u,t'')  \mid \sum_{i}(-1)^iH_c^i(L^{-1}(FU^{r-b,b})^{(s,t')},\overline{\mathbb{Q}}_{\ell})  \right),
\end{equation*}
where $t=t't''$ is the natural decomposition via $T^F \cong (T_1)^F\times (T^1)^F$. We want to analyse the structure of the variety $L^{-1}(FU^{r-b,b})^{(s,t')}$.

\vspace{2mm} If $h\in G^F$ conjugates $t'$ to $s^{-1}$ (i.e.\ $ht'h^{-1}=s^{-1}$), and $z\in \mathrm{Stab}_G(t')$ has its Lang image belonging to $FU^{r-b,b}$ (i.e.\ $L(z)\in FU^{r-b,b}$), then clearly $hz\in L^{-1}(FU^{r-b,b})^{(s,t')}$. So we have a well-defined multiplication morphism 
$$\phi\colon \{ h\in G^F\mid ht'=s^{-1}h  \}\times \{ z\in \mathrm{Stab}_G(t')^o \mid L(z)\in FU^{r-b,b} \} \rightarrow L^{-1}(FU^{r-b,b})^{(s,t')}$$
given by $\phi(h,z)=hz$.

\vspace{2mm} We shall show that $\phi$ is surjective. Take $x\in L^{-1}(FU^{r-b,b})^{(s,t')}$, then by the rationality of $s$ and $t'$ we see that 
$$sF(x)t'=F(x)=x\cdot L(x)=sxt'\cdot L(x),$$
which implies $L(x)\cdot t'=t'\cdot L(x)$, namely $L(x)\in \mathrm{Stab}_G(t')$. Now write $L(x)=x_1 x_2$ with $x_1\in G_1$ and $x_2\in G^1$, via the product decomposition $G=G_1\ltimes G^1$, then by taking the reduction map we see $x_1\in \mathrm{Stab}_{G_1}(t')$, which also implies that $x_2\in \mathrm{Stab}_{G^1}(t')$. Since $x_1\in FU$ is unipotent, we have (see e.g.\ \cite[2.5]{DM1991})
\begin{equation}\label{temp 1}
x_1\in \mathrm{Stab}_{G_1}(t')^o.
\end{equation}
On the other hand, for a given $\tilde{x}\in \mathrm{Stab}_{G^1}(t')$, consider the unique Iwahori decomposition $\tilde{x}=\tilde{t}\tilde{u}$ (in the sense of \cite[2.2]{Sta2009Unramified}), where $\tilde{t}\in T^1$ and $\tilde{u}\in U^1(U^{-})^1$. Clearly $\tilde{t}$ commutes with $t'$, so $\tilde{u}$ also commutes with $t'$. Write $\tilde{u}=\prod u_{\alpha}$, where $\alpha$ runs over the roots and $u_\alpha\in (U_{\alpha})^1$, then the commutativity between $t'$ and $\tilde{u}$ implies that, for each root $\alpha$, either $u_{\alpha}=1$ or $\alpha(t')=1$. Therefore $\mathrm{Stab}_{G^1}(t')$ is an affine space. (Moreover, the argument implies that $\mathrm{Stab}_{G}(t')^o$ is the Weil restriction of the base change of the connected reductive group $\mathrm{Stab}_{G_1}(t')^o$ from $\overline{\mathbb{F}}_q$ to $\mathcal{O}^{\mathrm{ur}}_r$; see also the argument in \cite[2.3]{DM1991}.) In particular, we have
\begin{equation}\label{temp 2}
x_2\in \mathrm{Stab}_{G^1}(t')^o.
\end{equation}
It follows from \eqref{temp 1} and \eqref{temp 2} that 
$$L(x)\in \mathrm{Stab}_{G}(t')^o,$$ 
so by the Lang--Steinberg theorem we see $L(x)=L(z)$ for some $z\in \mathrm{Stab}_{G}(t')^o$. Let $h:=xz^{-1}$, then $h\in G^F$ and 
$$ht'h^{-1}=xz^{-1}t'zt'^{-1}x^{-1}s^{-1}=s^{-1},$$
thus $\phi$ is surjective.

\vspace{2mm} Meanwhile, note that $\phi(h,z)=\phi(h',z')$ if and only if $h^{-1}h'=zz'^{-1}\in (\mathrm{Stab}_{G}(t')^o)^F$, so, for a fixed set of representatives of $G^F/(\mathrm{Stab}_{G}(t')^o)^F$, $\phi$ induces an isomorphism
\begin{equation*}
L^{-1}(FU^{r-b,b})^{(s,t')}\cong \coprod_{ \left\lbrace\substack{   h\in G^F/(\mathrm{Stab}_{G}(t')^o)^F \\ \text{s.t.}\  ht'=s^{-1}h  } \right\rbrace  } \{ z\in \mathrm{Stab}_G(t')^o \mid L(z)\in FU^{r-b,b} \}_h,
\end{equation*}
where $\{ z\in \mathrm{Stab}_G(t')^o \mid L(z)\in FU^{r-b,b} \}_h$ is a copy of $\{ z\in \mathrm{Stab}_G(t')^o \mid L(z)\in FU^{r-b,b} \}$ on which the action of $(u,t'')$ is given by $z\mapsto (u^h)\cdot z \cdot t''$.

\vspace{2mm} Note that 
$$\{ z\in \mathrm{Stab}_G(t')^o \mid L(z)\in FU^{r-b,b} \}= \mathrm{Stab}_G(t')^o\cap L^{-1}(\mathrm{Stab}_G(t')^o\cap FU^{r-b,b})$$ 
is the corresponding higher Deligne--Lusztig variety of $\mathrm{Stab}_G(t')^o$; let us denote it by $L_{t'}$. Therefore
\begin{equation*}
\begin{split}
\mathfrak{R}_{T,U,b}^{\theta}(g)
&=\frac{1}{|T^F|}\cdot \sum_{t\in T^F}\theta(t^{-1})\cdot  \sum_{ \left\lbrace\substack{   h\in G^F/(\mathrm{Stab}_{G}(t')^o)^F \\ \text{s.t.}\  ht'=s^{-1}h  } \right\rbrace } \mathrm{Tr}\left((u^h,t'')  \mid H_c^*(L_{t'})  \right)\\
&=\frac{1}{|T^F|}\cdot \sum_{t\in T^F}\theta(t^{-1})\cdot    \sum_{ \left\lbrace\substack{   h\in G^F\ \text{s.t.} \\  ht'=s^{-1}h  } \right\rbrace   }  \frac{1}{|(\mathrm{Stab}_{G}(t')^o)^F|}\cdot  \mathrm{Tr}\left((u^h,t'')  \mid H_c^*(L_{t'})  \right)\\
&=\frac{1}{|T^F|}\cdot \frac{1}{|(\mathrm{Stab}_{G}(s)^o)^F|} \cdot  \sum_{t\in T^F}  \sum_{ \left\lbrace\substack{   h\in G^F\ \text{s.t.} \\  ht'=s^{-1}h  } \right\rbrace    }       \theta(t^{-1})\cdot   \mathrm{Tr}\left((u^h,t'')  \mid H_c^*(L_{t'})  \right)\\
&=\frac{1}{|T^F|}\cdot \frac{1}{|(\mathrm{Stab}_{G}(s)^o)^F|} \cdot  \sum_{  \left\lbrace\substack{   h\in G^F\ \text{s.t.} \\  s\in {^h(T_1)^F}  } \right\rbrace     } \sum_{\tau\in {^h(T^1)}^F}         {\theta}((s\tau)^h)\cdot   \mathrm{Tr}\left((u,\tau^{-1})  \mid H_c^*({^hL_{t'}})  \right),
\end{split}
\end{equation*}
where $H_c^*(-):=\sum_i(-1)^iH_c^i(-,\overline{\mathbb{Q}}_{\ell})$ and in the last summation we put $t'=(s^h)^{-1}$. Since $\mathrm{Stab}_G(t')=\mathrm{Stab}_G(t'^{-1})$, we complete the proof by expressing $L_{t'}$ in terms of $s$.
\end{proof}

\begin{remark}\label{remark: stabiliser of ss element}
Let $s\in G_1$ be a semisimple element, then $\mathrm{Stab}_{G_1}(s)^o$ is a connected reductive group; from the proof of Theorem~\ref{theorem: character formula} we see that an analogue of this property also holds for $\mathrm{Stab}_{G}(s)^o$: If $s\in T_1$ is a semisimple element, and if $H$ is the $\overline{\mathbb{F}}_q$-Weil restriction of $\mathrm{Stab}_{G_1}(s)^o\times_{\overline{\mathbb{F}}_q}\mathcal{O}^{\mathrm{ur}}_r$, then $H=\mathrm{Stab}_{G}(s)^o$. In particular, if $s\in T_1^F$ is rational, then we can talk about the higher Deligne--Lusztig representations of $(\mathrm{Stab}_{G}(s)^o)^F$. 
\end{remark}

\begin{remark}\label{remark: caution on Weil restriction}
In general, if $s'\in G$ is a semisimple element, then it is contained in a maximal torus, thus can be conjugated to be some $s\in T_1\subseteq G_1$ (though, there are lots of semisimple elements not in $G_1$). However, in this general situation it can happen that the property in Remark~\ref{remark: stabiliser of ss element} fails for $\mathrm{Stab}_G(s')^o$: Suppose $s'^h=s$, then the Weil restriction of $(\mathrm{Stab}_G(s')^o)_1\times_{\mathrm{Spec}~\overline{\mathbb{F}}_q} \mathrm{Spec}~\mathcal{O}^{\mathrm{ur}}_r$ is $(\mathrm{Stab}_G(s)^o)^{\rho_{r,1}(h)}$, which may not be equal to $(\mathrm{Stab}_G(s)^o)^{h}=\mathrm{Stab}_G(s')^o$.
\end{remark}

\section{Green functions}\label{section: Green functions}

We want to give a Green function theoretic interpretation of Theorem~\ref{theorem: character formula}, in a way similar to \cite[4.2]{DL1976}; we start with the following definition.

\begin{defi}\label{definition: Green function}
Suppose that $G$ is a closed subgroup of some linear algebraic group $\widetilde{G}$ and that the Frobenius endomorphism $F$ is also defined on $\widetilde{G}$. Then for $h\in \widetilde{G}^F$ and a quadruple $(G,T,U,b)$, the function 
$$Q_{T^h,U^h,b}^{G^h}\colon (\mathcal{U}_{G^h})^F\times ((T^1)^h)^F\rightarrow \overline{\mathbb{Q}}_{\ell}$$ defined by
$$(u,\tau)\mapsto  \frac{1}{|T^F|}\mathrm{Tr}\left((u,\tau)\mid  \sum_{i}(-1)^i\cdot H_c^i(L^{-1}(FU^{r-b,b})^h,\overline{\mathbb{Q}}_{\ell}) \right)$$
is called a Green function on $(G^h)^F$. Here $\mathcal{U}_{G^h}$ is the variety of unipotent elements in $G^h$.
\end{defi}

The above definition actually includes a wider class of groups than the Weil restrictions of reductive groups over $\mathcal{O}_r$, as it may happen that $G^h$ does not equal to the Weil restriction of $(G^h)_1\times_{\mathrm{Spec}~\overline{\mathbb{F}}_q} \mathrm{Spec}~\mathcal{O}^{\mathrm{ur}}_r$ (see Remark~\ref{remark: caution on Weil restriction}).

\begin{remark}
Recall that, if $r=1$, then the values of $\mathfrak{R}_{T,U}^{\theta}$ at unipotent elements are independent of $\theta$; however, this is usually not true for a general $r\geq1$ (see \cite[Section~3]{Lusztig2004RepsFinRings}). So, instead of using $R_{T,U}^{1}$ to define Green functions as in Deligne--Lusztig's original work \cite{DL1976}, we used the above average form. 
\end{remark}

Using the above Green functions, Theorem~\ref{theorem: character formula} can be re-written as:

\begin{thm}\label{theorem: character formula 2}
Let $\mathfrak{R}_{T,U,b}^{\theta}$ be the character of $R_{T,U,b}^{\theta}$, then for $g\in G^F$ we have
\begin{equation*}
\mathfrak{R}_{T,U,b}^{\theta}(g)=\frac{1}{|(\mathrm{Stab}_{G}(s)^o)^F|} \cdot  \sum_{   \left\lbrace\substack{   h\in G^F\ \text{s.t.} \\  s\in {^h(T_1)^F}  } \right\rbrace   } \sum_{\tau\in {^h(T^1)^F} }         {\theta}(s^h\cdot \tau^h)\cdot   Q_{{^hT},{^hU}\cap\mathrm{Stab}_G(s)^o,b}^{\mathrm{Stab}_{G}(s)^o}(u,\tau^{-1}),
\end{equation*}
where $g=su$ is the Jordan decomposition.
\end{thm}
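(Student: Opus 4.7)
The plan is to read Theorem~\ref{theorem: character formula 2} as a direct repackaging of Theorem~\ref{theorem: character formula} once the inner cohomological trace is identified, up to the factor $1/|T^F|$, with a value of the Green function on $\mathrm{Stab}_G(s)^o$. Everything is driven by Definition~\ref{definition: Green function} together with the structural description of $\mathrm{Stab}_G(s)^o$ in Remark~\ref{remark: stabiliser of ss element}.

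First I would verify that the quadruple $(\mathrm{Stab}_G(s)^o,\,{}^hT,\,{}^hU\cap\mathrm{Stab}_G(s)^o,\,b)$ is a valid input for the Green function. Since $h\in G^F$ and $s\in{}^h(T_1)^F$, the connected commutative group ${}^hT$ contains $s$, hence sits inside $\mathrm{Stab}_G(s)^o$; by Remark~\ref{remark: stabiliser of ss element}, $\mathrm{Stab}_G(s)^o$ is itself the Weil restriction of a connected reductive group, with ${}^hT$ a maximal torus and ${}^hU\cap\mathrm{Stab}_G(s)^o$ the unipotent radical of the Borel ${}^hB\cap\mathrm{Stab}_G(s)^o$. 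Applying Definition~\ref{definition: Green function} to this data gives
\begin{equation*}
Q_{{}^hT,\,{}^hU\cap\mathrm{Stab}_G(s)^o,\,b}^{\mathrm{Stab}_G(s)^o}(u,\tau^{-1}) = \frac{1}{|({}^hT)^F|}\,\mathrm{Tr}\Bigl((u,\tau^{-1})\,\Big|\,H_c^*\bigl(L^{-1}\bigl(F({}^hU\cap\mathrm{Stab}_G(s)^o)^{r-b,b}\bigr)\cap\mathrm{Stab}_G(s)^o\bigr)\Bigr),
\end{equation*}
and since conjugation by the $F$-fixed element $h$ is a bijection $T^F\to({}^hT)^F$, the prefactors $1/|T^F|$ and $1/|({}^hT)^F|$ agree.

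Next I would identify this variety with the one appearing in Theorem~\ref{theorem: character formula}, namely $\mathrm{Stab}_G(s)^o\cap L^{-1}({}^h FU^{r-b,b})$. Using ${}^h FU^{r-b,b}=F({}^hU^{r-b,b})$ (since $h$ is $F$-fixed) and that $L$ sends $\mathrm{Stab}_G(s)^o$ into itself (as $s$ is rational, so $F$ preserves $\mathrm{Stab}_G(s)^o$), the asserted equality reduces to the combinatorial identity
\begin{equation*}
({}^hU\cap\mathrm{Stab}_G(s)^o)^{r-b,b}=\mathrm{Stab}_G(s)^o\cap{}^hU^{r-b,b}.
\end{equation*}
This identity is essentially the one already established in the proof of Theorem~\ref{theorem: character formula}: there it is shown that $\mathrm{Stab}_{G^1}(s)$ factors as a product of root subgroups $U_\alpha^1$ for roots $\alpha$ with $\alpha(s)=1$, and an analogous description applies at every congruence level via the Weil-restriction description of Remark~\ref{remark: stabiliser of ss element}. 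Combined with the product decomposition ${}^hU^{r-b,b}=({}^hU)^{r-b}\cdot(({}^hU)^-)^b$ and the uniqueness of the Iwahori factorisation, this yields the desired compatibility.

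The main obstacle is the combinatorial identity above, but given the root-group analysis already performed in the proof of Theorem~\ref{theorem: character formula}, this is essentially a bookkeeping exercise at successive congruence levels. Once the two varieties are identified as $(u,\tau^{-1})$-stable subvarieties, substituting into Theorem~\ref{theorem: character formula} immediately produces Theorem~\ref{theorem: character formula 2}.
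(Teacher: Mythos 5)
Your proposal is correct and matches the paper's approach: the paper itself offers no proof for Theorem~\ref{theorem: character formula 2}, presenting it as a direct re-writing of Theorem~\ref{theorem: character formula} via Definition~\ref{definition: Green function}, and you simply supply the (correct) verification that the quadruple $(\mathrm{Stab}_G(s)^o,\,{}^hT,\,{}^hU\cap\mathrm{Stab}_G(s)^o,\,b)$ is admissible and that the two varieties coincide, using the root-group/Iwahori analysis already established in the proof of Theorem~\ref{theorem: character formula} together with Remark~\ref{remark: stabiliser of ss element}.
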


In the $r=1$ case, the Green functions are $\mathbb{Z}$-valued (see e.g.\ \cite[7.6]{Carter1993FiGrLieTy}); a similar property holds for a general $r$.

\begin{coro}\label{coro: integer values of Green functions}
The function $\sum_{ \tau\in (T^1)^F }Q_{T,U,b}^G(-,\tau)$ on $(\mathcal{U}_G)^F$ is $\mathbb{Z}$-valued.
\end{coro}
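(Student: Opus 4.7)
The plan is to rewrite the summation as the compactly supported $\ell$-adic Euler characteristic of a single variety acted upon freely by $T^F$, thereby making the integrality manifest.

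First I would unfold the definition: setting $X := L^{-1}(FU^{r-b,b})$,
\[
\sum_{\tau \in (T^1)^F} Q_{T,U,b}^G(u,\tau) = \frac{1}{|T^F|} \sum_{\tau \in (T^1)^F} \mathrm{Tr}\bigl((u,\tau) \mid H_c^*(X, \overline{\mathbb{Q}}_\ell)\bigr).
\]
Since $\mathrm{char}(\mathcal{O}) = p$, every $\tau \in T^1$ is of $p$-power order (being congruent to $1$ modulo $\pi$ in a torus over a characteristic-$p$ local ring), and the same holds for the unipotent $u$. Hence $(u,\tau)$ acts on $X$ with order coprime to $\ell$, and the Lefschetz-type trace formula for such automorphisms lets me rewrite each trace as $\chi_c\bigl(X^{(u,\tau)}\bigr)$.

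Next, with the standard $(G^F,T^F)$-bimodule convention (where $(g,t)$ acts on $x \in G$ via $g x t^{\pm 1}$), the fixed loci $X^{(u,\tau)} = \{x \in X : x^{-1} u x = \tau^{\mp 1}\}$ are pairwise disjoint for distinct $\tau$. Summation therefore yields
\[
\sum_{\tau \in (T^1)^F} \chi_c\bigl(X^{(u,\tau)}\bigr) = \chi_c(Y), \qquad Y := \bigl\{x \in X : x^{-1} u x \in (T^1)^F\bigr\}.
\]
I would then exploit the right $T^F$-action on $G$: it preserves $X$ (since $T^F$ normalises $FU^{r-b,b}$) and preserves $Y$ (because $T$ is abelian, so $x^{-1} u x$ is unchanged when $x$ is replaced by $xt$ for $t \in T^F$, hence remains in $(T^1)^F$). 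It is moreover free, since $T^F$ acts freely on all of $G$ by right translation. Consequently $Y \to Y/T^F$ is an étale Galois cover with group $T^F$, and multiplicativity of $\chi_c$ under finite étale covers gives $\chi_c(Y) = |T^F| \cdot \chi_c(Y/T^F)$; hence
\[
\sum_{\tau \in (T^1)^F} Q_{T,U,b}^G(u,\tau) = \chi_c(Y/T^F) \in \mathbb{Z}.
\]

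The main potential obstacle is the Lefschetz-type formula invoked in the first step: although $(u,\tau)$ has order coprime to $\ell$, that order is a $p$-power, so Verdier's theorem for actions of finite order prime to $p$ does not apply verbatim. I would either cite the trace formula for unipotent automorphisms implicit in the proof of the Deligne--Lusztig fixed-point formula in \cite{DL1976} (the case where the semisimple part of the acting element is trivial), or invoke a sufficiently general Lefschetz trace formula for finite-order automorphisms of finite-type separated $\overline{\mathbb{F}}_q$-schemes.
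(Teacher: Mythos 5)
There is a genuine gap at the crucial step: the Lefschetz-type formula $\mathrm{Tr}\bigl((u,\tau)\mid H_c^*(X)\bigr) = \chi_c\bigl(X^{(u,\tau)}\bigr)$ that you invoke does \emph{not} hold for automorphisms of $p$-power order, and no citation will rescue it because it is simply false. The identity $\mathcal{L}(\sigma,X)=\chi_c(X^{\sigma})$ is what one gets from \cite[Theorem 3.2]{DL1976} when $\sigma$ has order \emph{prime to} $p$ (take the unipotent part $u=1$ there, so $\mathcal{L}(\sigma,X)=\mathcal{L}(1,X^\sigma)=\chi_c(X^\sigma)$). In the opposite case, when the semisimple part is trivial, that theorem reduces to the tautology $\mathcal{L}(u,X)=\mathcal{L}(u,X)$ and gives no fixed-point formula at all. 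A concrete counterexample: let $X=\mathbb{A}^1_{\overline{\mathbb{F}}_p}$ with $\sigma\colon x\mapsto x+1$ (order $p$, acting freely). Then $X^\sigma=\varnothing$ so $\chi_c(X^\sigma)=0$, but $\sigma$ acts trivially on $H_c^2(\mathbb{A}^1)\cong\overline{\mathbb{Q}}_\ell(-1)$ (it extends to an automorphism of $\mathbb{P}^1$, and automorphisms act trivially on top cohomology), whence $\mathcal{L}(\sigma,\mathbb{A}^1)=1\neq 0$. Since the elements $(u,\tau)$ in your sum are precisely $p$-elements, every term in your purported identity $\mathrm{Tr}((u,\tau)\mid H_c^*(X))=\chi_c(X^{(u,\tau)})$ is of the bad type, and the subsequent steps (disjointness of the fixed loci, freeness of the $T^F$-action, multiplicativity of $\chi_c$ over the étale cover $Y\to Y/T^F$ --- all of which are individually fine) then rest on a false premise.

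The paper's own proof sidesteps this entirely and is considerably shorter. It applies the character formula (Theorem~\ref{theorem: character formula 2}) with $\theta=1$ to show that $\sum_{\tau\in(T^1)^F}Q_{T,U,b}^G(u,\tau)=\mathfrak{R}^1_{T,U,b}(u)$, which is the value of a virtual character of $G^F$ and hence an algebraic integer; independence of $\ell$ of Lefschetz numbers (\cite[1.2]{Lusztig_whiteBk}) shows the value is also rational, so it lies in $\mathbb{Z}$. If you want to salvage the spirit of your approach, note that averaging over $\tau\in(T^1)^F$ projects onto $(T^1)^F$-invariants, giving $\frac{1}{|T^F|}\sum_\tau\mathrm{Tr}((u,\tau)\mid H_c^*(X))=\frac{1}{|T_1^F|}\mathrm{Tr}\bigl(u\mid H_c^*(X/(T^1)^F)\bigr)$; but you would then still face the problem of showing this rational number is an integer, which again amounts to a rationality-plus-integrality argument rather than a pointwise fixed-point count.
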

\begin{proof}
Let $u\in G^F$ be a unipotent element. By Theorem~\ref{theorem: character formula 2} we have
\begin{equation*}
\begin{split}
 \mathfrak{R}^{1}_{T,U,b}(u)
&=\frac{1}{|G^F|} \sum_{h\in G^F} \sum_{\tau\in {^h(T^1)^F}}  Q^G_{{^hT},{^hU},b}(u,\tau^{-1})\\
&=\frac{1}{|G^F|} \sum_{h\in G^F} \sum_{\tau\in {(T^1)^F}}  Q^G_{{T},{U},b}(u,\tau^{-1})=\sum_{ \tau\in (T^1)^F }Q_{T,U,b}^G(u,\tau),
\end{split}
\end{equation*}
where the second equality follows from that characters are class functions and $h\in G^F$. 

\vspace{2mm} In particular, $\sum_{ \tau\in (T^1)^F }Q_{T,U,b}^G(-,\tau)$ has its values being algebraic integers. Meanwhile, by basic properties of Lefschetz numbers (see \cite[1.2]{Lusztig_whiteBk}) we know this sum takes values in $\mathbb{Q}$, so it must takes values in $\mathbb{Z}$.
\end{proof}

\begin{defi}
Let $p:=\mathrm{char}(\mathbb{F}_q)$. A class function $f\colon G^F\rightarrow \overline{\mathbb{Q}}_{\ell}$ is called $p$-constant if $f(g)=f(s)$ for all $g\in G^F$, where $g=su$ is the Jordan decomposition with $s$ semisimple and $u$ unipotent.
\end{defi}

The notion of $p$-constant functions is very useful in the representations of Lie type groups. (For example, see \cite[Section~2]{Broue_Michel_blocs_CrelleJ} for an application on the relations between $\ell$-blocks and Lusztig series.) The following corollary generalises a property in the $r=1$ case (see \cite[3.8]{Digne_Michel_FoncteursLusztig_J.Alg_1987}).

\begin{coro}\label{coro: a product property}
Let $f\colon G^F\rightarrow \overline{\mathbb{Q}}_{\ell}$ be a $p$-constant class function, then $\mathfrak{R}_{T,U,b}^{\theta\cdot \mathrm{Res}_{T^F}^{G^F}f}=\mathfrak{R}_{T,U,b}^{\theta}\cdot f$. 
\end{coro}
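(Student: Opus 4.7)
The plan is to evaluate both sides at an arbitrary $g\in G^F$ using the character formula from Theorem~\ref{theorem: character formula 2}, and reduce everything to two elementary facts about $p$-constant class functions: they only see the semisimple part of their argument, and they are conjugation invariant. No new geometry is needed beyond what the character formula already packages for us.

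Write the Jordan decomposition $g=su$. Substituting the twisted character $\theta\cdot\mathrm{Res}_{T^F}^{G^F}f$ into Theorem~\ref{theorem: character formula 2} gives
\begin{equation*}
\mathfrak{R}_{T,U,b}^{\theta\cdot\mathrm{Res} f}(g)=\frac{1}{|(\mathrm{Stab}_{G}(s)^o)^F|}\sum_{\{h\in G^F:\,s\in{}^h(T_1)^F\}}\sum_{\tau\in{}^h(T^1)^F}\theta(s^h\tau^h)\,f(s^h\tau^h)\,Q_{{}^hT,{}^hU\cap\mathrm{Stab}_G(s)^o,b}^{\mathrm{Stab}_G(s)^o}(u,\tau^{-1}).
\end{equation*}
The idea is to pull the factor $f(s^h\tau^h)$ out of the double sum by showing it equals $f(g)$, after which the remaining expression is precisely $\mathfrak{R}_{T,U,b}^{\theta}(g)$.

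To justify the pull-out, I first analyse the Jordan decomposition of $s^h\tau^h$ inside the algebraic group $G$. The element $s^h$ lies in $(T_1)^F\subseteq T^F$ and is semisimple in $T$ (it belongs to the reductive quotient $T_1$), while $\tau^h$ lies in $(T^1)^F$; as noted in the discussion preceding and inside the proof of Theorem~\ref{theorem: character formula}, the normal subgroup $T^1$ is an iterated extension of additive groups and hence consists of unipotent elements of $G$. Since $s^h$ and $\tau^h$ both lie in the commutative group $T$, they commute, so $s^h\tau^h=\tau^h s^h$ is the Jordan decomposition of $s^h\tau^h$ with semisimple part $s^h$. By $p$-constancy of $f$ we obtain $f(s^h\tau^h)=f(s^h)$, and since $f$ is a class function and $h\in G^F$, $f(s^h)=f(s)$. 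Finally, $p$-constancy applied to $g=su$ gives $f(s)=f(g)$. Thus $f(s^h\tau^h)=f(g)$ on every term of the sum.

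Factoring this common value out and comparing with Theorem~\ref{theorem: character formula 2} applied to $\theta$ alone yields
\begin{equation*}
\mathfrak{R}_{T,U,b}^{\theta\cdot\mathrm{Res} f}(g)=f(g)\cdot\mathfrak{R}_{T,U,b}^{\theta}(g),
\end{equation*}
which is the claim. The only step that might require a moment of care is verifying the Jordan decomposition of $s^h\tau^h$, i.e.\ that elements of $T^1$ are unipotent in the ambient algebraic group $G$; everything else is a routine bookkeeping of the character formula together with the two defining properties of $p$-constant class functions.
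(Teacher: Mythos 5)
Your proof is correct and takes essentially the same approach as the paper: evaluate both sides with Theorem~\ref{theorem: character formula 2} and pull out $f$ using $p$-constancy together with conjugation invariance. The paper's own proof simply does the chain of equalities in the order $f((s\tau)^h)=f(s\tau)=f(s)=f(g)$ (class function first, then $p$-constancy twice), whereas you apply $p$-constancy first; both require, and you make more explicit, that $T^1$ consists of unipotent elements so that $s^h\tau^h$ has semisimple part $s^h$.
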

\begin{proof}
Let $g\in G^F$ and let $g=su$ be the Jordan decomposition. By Theorem~\ref{theorem: character formula 2} we see that $\mathfrak{R}_{T,U,b}^{\theta\cdot \mathrm{Res}_{T^F}^{G^F}f}(g)$ equals to
$$\frac{1}{|(\mathrm{Stab}_{G}(s)^o)^F|} \cdot  \sum_{  \left\lbrace\substack{   h\in G^F\ \text{s.t.} \\  s\in {^h(T_1)^F}  } \right\rbrace   } \sum_{\tau\in {^h(T^1)^F} }         {\theta}((s\tau)^h)\cdot {f}((s\tau)^h)\cdot   Q_{{^hT},{^hU}\cap\mathrm{Stab}_G(s)^o,b}^{\mathrm{Stab}_{G}(s)^o}(u,\tau^{-1}).$$
Note that $f((s\tau)^h)=f(s\tau)=f(s)=f(g)$, which completes the proof.
\end{proof}

There is also an inner product version of the above corollary (see \cite[7.11]{DL1976} or \cite[7.6.3]{Carter1993FiGrLieTy} for the $r=1$ case).

\begin{coro}\label{coro: a product property in inner product form}
Let $R$ be the representation space of a $p$-constant virtual character of $G^F$, then $\langle R,R_{T,U,b}^{\theta}\rangle_{G^F}=\langle \mathrm{Res}^{G^F}_{T^F},\theta\rangle_{T^F}$. 
\end{coro}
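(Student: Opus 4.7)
The plan is to reduce, via Corollary~\ref{coro: a product property}, to the special case $R = 1_{G^F}$---i.e.\ to the Frobenius-reciprocity statement
$$\langle 1_{G^F}, R_{T,U,b}^\chi\rangle_{G^F} = \langle 1_{T^F}, \chi\rangle_{T^F}\qquad(\star)$$
for an arbitrary character $\chi$ of $T^F$---and then to establish $(\star)$ using Brou\'e's character formula together with the fact that the Lang map realises $L^{-1}(FU^{r-b,b})$ as a $G^F$-torsor over the affine variety $FU^{r-b,b}$.

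For the reduction, set $\psi := \mathrm{Res}^{G^F}_{T^F} R$, which is again $p$-constant. Applying Corollary~\ref{coro: a product property} to the $p$-constant class function $\bar R$ gives $\bar R(g)\cdot\mathfrak{R}_{T,U,b}^\theta(g) = \mathfrak{R}_{T,U,b}^{\theta\bar\psi}(g)$. Using $\chi(g^{-1}) = \overline{\chi(g)}$ to move the complex conjugate onto $R$, the inner product rewrites as
$$\langle R, R_{T,U,b}^\theta\rangle_{G^F} = \frac{1}{|G^F|}\sum_g \bar R(g)\,\mathfrak{R}_{T,U,b}^\theta(g) = \langle 1_{G^F}, R_{T,U,b}^{\theta\bar\psi}\rangle_{G^F},$$
while analogously $\langle \psi, \theta\rangle_{T^F} = \langle 1_{T^F}, \theta\bar\psi\rangle_{T^F}$, so matters do reduce to $(\star)$.

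For $(\star)$, I would start from the expression for $\mathfrak{R}_{T,U,b}^\chi$ given by Brou\'e's formula (the first display in the proof of Theorem~\ref{theorem: character formula}), swap the sums over $g\in G^F$ and $t\in T^F$, and apply $|G^F|^{-1}\sum_g\mathrm{Tr}((g,t)\mid V) = \mathrm{Tr}(t\mid V^{G^F})$ to rewrite $\langle 1_{G^F}, R_{T,U,b}^\chi\rangle_{G^F}$ as
$$\frac{1}{|T^F|}\sum_{t\in T^F}\chi(t^{-1})\cdot\mathrm{Tr}\bigl(t\bigm|H_c^*(L^{-1}(FU^{r-b,b}))^{G^F}\bigr).$$
Lang's theorem makes $G^F$ act freely by left translation on $L^{-1}(FU^{r-b,b})$, and the Lang map identifies the quotient with $FU^{r-b,b}$; hence $H_c^*(L^{-1}(FU^{r-b,b}))^{G^F} \cong H_c^*(FU^{r-b,b})$ as virtual $T^F$-modules, the residual right $T^F$-action on the target being conjugation $y\mapsto t^{-1}yt$ (as read off from $L(gt)=t^{-1}L(g)t$), which is linear in the root-subgroup coordinates.

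Finally, the Iwahori decomposition identifies $FU^{r-b,b}$ with an affine space, so only its top-degree compactly supported cohomology is nonzero and the Grothendieck--Lefschetz formula gives $\mathrm{Tr}(t\mid H_c^*(FU^{r-b,b})) = \chi_c((FU^{r-b,b})^t)$; the fixed locus is the affine subspace obtained by discarding the root coordinates with $\alpha(t)\neq 1$, hence has Euler characteristic $1$. Substituting $\mathrm{Tr}(t\mid\cdots) = 1$ yields $|T^F|^{-1}\sum_t\chi(t^{-1}) = \langle 1_{T^F},\chi\rangle_{T^F}$, proving $(\star)$. The expected obstacle is precisely this last cohomological step: one must verify carefully that taking $G^F$-invariants of $H_c^*(L^{-1}(FU^{r-b,b}))$ descends correctly along the Lang torsor, and that the elementary affine-space Lefschetz calculation applies even for elements of $T^F$ whose $p$-part comes from $(T^1)^F$---both are standard once affineness and freeness are in hand.
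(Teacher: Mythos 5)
Your argument is correct and essentially reproduces the paper's proof: you reduce via Corollary~\ref{coro: a product property} and the Hom--tensor trick to the statement $(\star)$, and then establish $(\star)$ by identifying the $G^F$-invariants of $H_c^*(L^{-1}(FU^{r-b,b}))$ with the cohomology of the affine space $FU^{r-b,b}$, which is exactly what the paper packages as the adjunction $\langle R_{T,U,b}^{\chi},1\rangle_{G^F}=\langle\chi,{}^*R_{T,U,b}^{1}\rangle_{T^F}$ together with ${}^*R_{T,U,b}^{1}=H_c^*(FU^{b,r-b})=\overline{\mathbb{Q}}_\ell$ (citing \cite[6.4]{DL1976}). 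One small remark on the step you flag as delicate: rather than invoking the Deligne--Lusztig fixed-point formula for the $p'$-part and then handling the $p$-part separately, it is cleanest to note that the right $T^F$-action on $FU^{r-b,b}$ by conjugation extends to an action of the connected group $T$, which therefore acts trivially on $H_c^*$, so $\mathrm{Tr}(t\mid H_c^*(FU^{r-b,b}))=\chi_c(FU^{r-b,b})=1$ for every $t\in T^F$ at once.
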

\begin{proof}
By Corollary~\ref{coro: a product property} and the Hom--tensor adjunction we see
\begin{equation}\label{temp 4}
\langle R,R_{T,U,b}^{\theta}\rangle_{G^F}=\langle R_{T,U,b}^{\theta^{-1}\cdot \mathrm{Res}_{T^F}^{G^F}\chi_R},1\rangle_{G^F},
\end{equation}
where $\chi_R$ means the character of $R$. As $R^{(-)}_{T,U,b}$ is the induction (from the virtual representations of $T^F$ to the virtual representations of $G^F$) provided by the alternating sum of bimodules $\sum_i(-1)^iH_c^i(L^{-1}(FU^{b,r-b}),\overline{\mathbb{Q}}_{\ell})$, there is a restriction functor ${^*R_{T,U,b}^{(-)}}$ adjoint to $R_{T,U,b}^{(-)}$ (see \cite[Chapter~4]{DM1991} for more details); we get
$$\eqref{temp 4}=\langle \theta^{-1}\otimes \mathrm{Res}_{T^F}^{G^F}\chi_R,{^*R_{T,U,b}^{1}}\rangle_{T^F}.$$
However, by basic properties of the $\ell$-adic cohomology we see that
$${^*R_{T,U,b}^{1}}=\sum_i(-1)^iH_c^{i}(G^F\backslash L^{-1}(FU^{b,r-b}),\overline{\mathbb{Q}}_{\ell})=\sum_i(-1)^iH_c^{i}(FU^{b,r-b},\overline{\mathbb{Q}}_{\ell})=\overline{\mathbb{Q}}_{\ell}$$ 
is the trivial representation (\cite[6.4]{DL1976}), from which the assertion follows.
\end{proof}

There is a ``Green (function) integration formula'':

\begin{prop}\label{prop: summation formula}
Let $\widetilde{G}$ be as in Definition~\ref{definition: Green function}. We have 
$$\sum_{u\in (\mathcal{U}_{G^x})^F}\sum_{\tau\in ((T^1)^x)^F} Q_{T^x,U^x,b}^{G^x}(u,\tau)=|G^F/T_1^F|$$ 
for every $x\in \widetilde{G}^F$. (Compare the $r=1$ case in \cite[7.6.1]{Carter1993FiGrLieTy}.)
\end{prop}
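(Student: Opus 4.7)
The plan is to reduce to the case $x = 1$ (the general case following by $F$-equivariant conjugation by $x \in \widetilde{G}^F$) and then to compute the sum directly. Unwinding Definition~\ref{definition: Green function}, we are looking at
$$S = \frac{1}{|T^F|} \sum_{u \in (\mathcal{U}_G)^F}\sum_{\tau \in (T^1)^F} \mathrm{Tr}\bigl((u,\tau) \mid H_c^*(X)\bigr),$$
where $X = L^{-1}(FU^{r-b,b})$ and $H_c^*(-) = \sum_i(-1)^i H_c^i(-,\overline{\mathbb{Q}}_\ell)$. Since $u$ is unipotent and $\tau \in (T^1)^F$ lies in the pro-$p$ subgroup $T^1$, the pair $(u,\tau)$ acts on $X$ with $p$-power order, so the Deligne--Lusztig fixed point formula (as used already in the proof of Theorem~\ref{theorem: character formula}) replaces each trace by the Euler characteristic $\chi_c(X^{(u,\tau)})$ of the fixed locus.

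Next I would interchange the sum over $u$ with this Euler characteristic. For fixed $\tau$ and $x \in X$, the condition $u x \tau = x$ forces $u = x\tau^{-1}x^{-1}$; this element is automatically unipotent because $\tau$ is, and using $F(\tau)=\tau$ its $F$-rationality rewrites as $L(x) \in \mathrm{Stab}_G(\tau)$. Therefore
$$\sum_{u \in (\mathcal{U}_G)^F} \chi_c\bigl(X^{(u,\tau)}\bigr) = \chi_c(Y_\tau), \qquad Y_\tau := L^{-1}\bigl(\mathrm{Stab}_G(\tau) \cap FU^{r-b,b}\bigr).$$
Because $L\colon G \to G$ is an étale Galois cover with group $G^F$, this yields $\chi_c(Y_\tau) = |G^F|\cdot \chi_c(\mathrm{Stab}_G(\tau) \cap FU^{r-b,b})$.

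The key geometric input is then that $\mathrm{Stab}_G(\tau) \cap FU^{r-b,b}$ is an affine space, and hence has Euler characteristic $1$. I would use the root subgroup decomposition of $FU^{r-b,b}$ into a product of the pieces $(FU_\alpha)^{r-b}$ and $(FU_\alpha)^{b}$ (over the relevant positive and negative roots respectively), together with the fact that $\tau$-conjugation preserves each root subgroup and acts there as multiplication by $\alpha(\tau) \in 1+\pi\mathcal{O}^{\mathrm{ur}}_r$: on each root factor the fixed locus is then the affine subspace cut out by $(\alpha(\tau) - 1)v_\alpha = 0$. Assembling everything gives
$$S = \frac{|(T^1)^F|\cdot |G^F|}{|T^F|} = \frac{|G^F|}{|T_1^F|} = |G^F/T_1^F|,$$
where the penultimate equality uses Lang--Steinberg on the connected subgroup $T^1$ to identify $T^F/(T^1)^F$ with $T_1^F$. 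The main obstacle is the affineness step, but it is essentially the same uniqueness-of-decomposition calculation carried out for $\mathrm{Stab}_{G^1}(\tau)$ in the proof of Theorem~\ref{theorem: character formula}; the Lefschetz formula for $p$-power-order automorphisms and the étale property of $L$ are then standard ingredients.
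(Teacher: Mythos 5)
There is a genuine gap at the very first step. You invoke the Deligne--Lusztig fixed point formula to convert each trace $\mathrm{Tr}\bigl((u,\tau)\mid H_c^*(X)\bigr)$ into the Euler characteristic $\chi_c(X^{(u,\tau)})$, on the grounds that $(u,\tau)$ has $p$-power order. But the formula (\cite[Theorem~3.2]{DL1976}) says the opposite: for $\sigma=\sigma_{p'}\sigma_{p}$ the decomposition into its $p'$-part and $p$-part, one has $\mathcal{L}(\sigma,X)=\mathcal{L}(\sigma_p,X^{\sigma_{p'}})$, so one may pass to the fixed locus of the $p'$-part only. It is elements of order prime to $p$ for which $\mathcal{L}(\sigma,X)=\chi_c(X^{\sigma})$. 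For $(u,\tau)$ of $p$-power order the formula is a tautology, and the identification you need is simply false. Already in the $r=1$ theory one has a counterexample: take $G_1=\mathrm{SL}_2$ with $T$ split, $\tau=1$ and $u$ a regular unipotent element; then $Q_T(u)=1$ so $\mathcal{L}\bigl((u,1),L^{-1}(U)\bigr)=|T^F|=q-1$, whereas $L^{-1}(U)^{(u,1)}=\varnothing$ (since $ux=x$ forces $u=1$) and hence $\chi_c$ of the fixed locus is $0$. This is exactly the reason the proof of Theorem~\ref{theorem: character formula} in the paper only peels off the $p'$-part $(s,t')$ and keeps a trace of the remaining $p$-part $(u,t'')$.

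The later steps --- collecting the fixed loci $X^{(u,\tau)}$ over $u$ into $L^{-1}\bigl(\mathrm{Stab}_G(\tau)\cap FU^{r-b,b}\bigr)$, using that $L$ is a $G^F$-torsor, and the affineness of $\mathrm{Stab}_G(\tau)\cap FU^{r-b,b}$ via root subgroup coordinates --- are sound and would close the argument if the first step held, and your final numerical answer agrees with the statement; this is what makes the error easy to overlook. But as written the proof is broken, and I do not see a direct repair: summing $\chi_c$ of fixed loci and summing Lefschetz numbers agree in total (you can verify this numerically) yet differ term by term, and your argument relies on the term-by-term identification. By contrast, the paper's proof of Proposition~\ref{prop: summation formula} avoids this entirely: it proceeds by induction on $\dim G/\dim T$, starts from the identity $\langle R_{T,U,b}^{1},1\rangle_{G^F}=1$ (Corollary~\ref{coro: a product property in inner product form}), expands it via Theorem~\ref{theorem: character formula 2}, and splits the sum according to whether the semisimple part $s$ is central, applying the inductive hypothesis to the proper centralisers $\mathrm{Stab}_{G^x}(s)^o$. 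If you want a direct geometric proof along your lines, you would need to first sum the Lefschetz numbers over the group $(T^1)^F$ (which does act, and freely) to land in $H_c^*(X)^{(T^1)^F}$ before touching $u$, rather than appealing to a fixed point theorem that is unavailable for $p$-power-order automorphisms.
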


\begin{proof}
We use an induction argument on $i:=|\dim G/\dim T|$. If $i=1$, then $G=T$, hence
$$\sum_{u\in (\mathcal{U}_{G^x})^F}\sum_{\tau\in ((T^1)^x)^F} Q_{T^x,U^x,b}^{G^x}(u,\tau)=\frac{1}{|T^F|}\sum_{u\in ((T^1)^x)^F}\sum_{\tau\in ((T^1)^x)^F} \mathrm{Tr}((u,\tau) \mid \overline{\mathbb{Q}}_{\ell}[(T^x)^F] )=|T^F/T_1^F|,$$
as desired. Suppose now the assertion is true for every $i\leq n$, and suppose $\dim G/\dim T=n+1$. From the proof of Corollary~\ref{coro: a product property in inner product form} we see that $\langle \sum_i(-1)^i H_c^i(L^{-1}(FU)^x,\overline{\mathbb{Q}}_{\ell}),1 \rangle_{(G^x)^F}=1$, that is: (Use Theorem~\ref{theorem: character formula 2})
\begin{equation}\label{temp 5}
\begin{split}
\frac{1}{|G^F|} &  \sum_{g=su\in (G^x)^F}\frac{1}{|(\mathrm{Stab}_{G^x}(s)^o)^F|}  \\
& \times  \sum_{   \left\lbrace\substack{   h\in (G^x)^F\ \text{s.t.} \\  s\in {^h((T_1)^x)^F}  } \right\rbrace    }    \sum_{\tau\in {^h((T^1)^x)^F} }          Q_{{^h(T^x)},{^h(U^x)}\cap\mathrm{Stab}_{G^x}(s)^o,b}^{\mathrm{Stab}_{G^x}(s)^o}(u,\tau^{-1})=1,
\end{split}
\end{equation}
where $g=su$ denotes the Jordan decomposition. 

\vspace{2mm} To proceed, we need to show that $u\in \mathrm{Stab}_{G^x}(s)^o$ for the Jordan decomposition in the above summation, which is well-known when $r=1$. By construction there is a $y\in \widetilde{G}^F$ such that $s^y\in (T_1)^F$ and $G^{xy}=G$. So $\rho_{r,1}(u^y)\in \mathrm{Stab}_{G}(s^y)^o$  (use e.g.\ Remark~\ref{remark: stabiliser of ss element}), thus $u^y\in \mathrm{Stab}_{G}(s^y)^o$, so $u\in \mathrm{Stab}_{G^x}(s)^o$.  Therefore \eqref{temp 5} becomes
\begin{equation}\label{temp 6}
\begin{split}
\frac{1}{|G^F|} &  \sum_{   \substack{     s\in (G^x)^F \\   \text{semisimple}  }          }    \frac{1}{|(\mathrm{Stab}_{G^x}(s)^o)^F|} \sum_{    \substack{    u\in (\mathrm{Stab}_{G^x}(s)^o)^F \\   \text{unipotent} }     } \\
& \times \sum_{   \left\lbrace\substack{   h\in (G^x)^F\ \text{s.t.} \\  s\in {^h((T_1)^x)^F}  } \right\rbrace     }    \sum_{\tau\in {^h((T^1)^x)^F} }          Q_{{^h(T^x)},{^h(U^x)}\cap\mathrm{Stab}_{G^x}(s)^o,b}^{\mathrm{Stab}_{G^x}(s)^o}(u,\tau^{-1})=1.
\end{split}
\end{equation}
There are two cases of $s$, depending on whether $s\in Z^x$, where $Z$ denotes the center of $G$. As $\mathrm{Stab}_{G^x}(s)^o=G^x$ if and only if $s\in Z^x$, by our induction assumption we can re-write \eqref{temp 6} as (note that $h\in (G^x)^F$)
\begin{equation*}
\begin{split}
\frac{|Z_1^F|}{|G^F|} \cdot &  \sum_{u\in (\mathcal{U}_{G^x})^F  }    \sum_{\tau\in {((T^1)^x)^F} }          Q_{{T^x},{U^x},b}^{G^x}(u,\tau^{-1}) \\
& + \frac{1}{|G^F|}  \sum_{   \substack{  s\in (G^x)^F\backslash (Z^x)^F  \\  \text{semisimple} }  } \sum_{    \left\lbrace  \substack{   h\in (G^x)^F\ \text{s.t.} \\ s\in {^h((T_1)^x)^F}    }   \right\rbrace   }  1/|T_1^F|  =1.
\end{split}
\end{equation*}
Therefore
\begin{equation}\label{temp 7}
\begin{split}
\frac{|Z_1^F|}{|G^F|} \cdot  & \sum_{u\in (\mathcal{U}_{G^x})^F  }   \sum_{\tau\in {((T^1)^x)^F} }          Q_{{T^x},{U^x},b}^{G^x}(u,\tau^{-1}) -\frac{1}{|G^F|}  \sum_{s\in  (Z^x)_1^F} \sum_{ \left\lbrace  \substack{   h\in (G^x)^F\ \text{s.t.} \\ s\in {^h((T_1)^x)^F}    }   \right\rbrace    }  1/|T_1^F| \\
& + \frac{1}{|G^F|}  \sum_{   \substack{  s\in (G^x)^F \\ \text{semisimple}  }  } \sum_{ \left\lbrace  \substack{   h\in (G^x)^F\ \text{s.t.} \\ s\in {^h((T_1)^x)^F}    }   \right\rbrace    }  1/|T_1^F|  =1.
\end{split}
\end{equation}
In the index of the above summation, note that, if $s\in {^h((T_1)^x)^F}$, then $s$ is automatically semisimple, thus \eqref{temp 7} becomes
\begin{equation*}
\begin{split}
\frac{|Z_1^F|}{|G^F|} \cdot  & \sum_{u\in (\mathcal{U}_{G^x})^F  }   \sum_{\tau\in {((T^1)^x)^F} }          Q_{{T^x},{U^x},b}^{G^x}(u,\tau^{-1}) -\frac{|Z_1^F|}{|T_1^F|} +1  =1,
\end{split}
\end{equation*}
from which the proposition follows.
\end{proof}

\section{Regular semisimple elements}\label{section: Regular semisimple elements}

In this section we focus on the values of higher Deligne--Lusztig characters at regular semisimple elements. Recall that a semisimple element is called regular, if its centraliser is of minimal dimension; in our situation, this is equivalent to the saying that, a semisimple element $s$ is regular if and only if $\mathrm{Stab}_G(s)^o$ is isomorphic to $T$.

\begin{thm}\label{thm: evaluation at regular semisimple elements}
Let $s\in G^F$ be a regular semisimple element. We have: $\mathfrak{R}^{\theta}_{T,U,b}(s)=0$ if the conjugacy class of $s$ in $G^F$ does not intersect $T_1$, and
$$\mathfrak{R}^{\theta}_{T,U,b}(s)=\sum_{w\in W(T)^F}(^w\theta)(s^c)$$
if $s^c\in T_1$ for some $c\in G^F$ (note that this can happen even if $s\in G\backslash G_1$). Here $W(T):=N(T)/T$ is isomorphic to the Weyl group $W(T_1):=N_{G_1}(T_1)/T_1$; see \cite[Section~2]{ChenStasinski_2016_algebraisation}.
\end{thm}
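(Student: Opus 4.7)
The plan is to apply Theorem~\ref{theorem: character formula 2} with $g=s$ (so the unipotent part $u$ is trivial) and exploit regularity to reduce the Green function factor to an explicit computation on a torus. For each $h\in G^F$ appearing in the outer sum we have $s^h\in T_1^F$; since $s^h$ is regular semisimple in $T_1$, its $G_1$-stabiliser equals $T_1$, and the root-subgroup analysis from the proof of Theorem~\ref{theorem: character formula} applies: no root $\alpha$ satisfies $\alpha(s^h)=1$, so in the Iwahori decomposition of any element of $\mathrm{Stab}_{G^1}(s^h)$ the unipotent part is forced to be trivial, giving $\mathrm{Stab}_{G^1}(s^h)=T^1$. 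Combined with Remark~\ref{remark: stabiliser of ss element}, this yields the full equality $\mathrm{Stab}_G(s^h)=T$, hence $\mathrm{Stab}_G(s)^o={}^hT$, an abelian group isomorphic to $T$.

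Next, I would compute the Green function. Because $U\cap T=\{1\}$ (Levi decomposition), ${}^hU\cap\mathrm{Stab}_G(s)^o={}^hU\cap {}^hT=\{1\}$, so the Green factor specialises to $Q_{{}^hT,\{1\},b}^{{}^hT}(1,\tau^{-1})$. Its defining variety is $L^{-1}(F\cdot 1)=({}^hT)^F$, a finite zero-dimensional scheme, so $\sum_i(-1)^iH_c^i$ is simply the regular representation $\overline{\mathbb{Q}}_\ell[({}^hT)^F]$, on which $(1,\tau^{-1})$ acts by right translation. A fixed-point count gives trace $0$ unless $\tau=1$, in which case the trace equals $|T^F|$; hence $Q_{{}^hT,\{1\},b}^{{}^hT}(1,\tau^{-1})=\delta_{\tau,1}$, and the inner sum in Theorem~\ref{theorem: character formula 2} collapses to $\theta(s^h)$. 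Using $|(\mathrm{Stab}_G(s)^o)^F|=|T^F|$, Theorem~\ref{theorem: character formula 2} becomes
\begin{equation*}
\mathfrak{R}^\theta_{T,U,b}(s)=\frac{1}{|T^F|}\sum_{\substack{h\in G^F\\ s^h\in T_1^F}}\theta(s^h).
\end{equation*}

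The final step is to parameterise the index set. If it is empty then the sum is $0$, giving the first clause. Otherwise, fix $c\in G^F$ with $s^c\in T_1^F$ and choose a set of rational representatives $\mathcal{N}\subseteq N_G(T)^F$ for $W(T)^F\cong W(T_1)^F$ (existence by Lang--Steinberg applied to $N_G(T)/T$). Since $s^c$ is regular semisimple in $T_1^F$, any two elements of $T_1^F$ conjugate in $G^F$ are $W(T_1)^F$-conjugate, so for every $h$ with $s^h\in T_1^F$ there is a unique $n\in\mathcal{N}$ with $s^h=(s^c)^n$. Then $cn$ and $h$ conjugate $s$ to the same element, so $h(cn)^{-1}\in\mathrm{Stab}_G(s)^F=({}^cT)^F=cT^Fc^{-1}$ (using the first step), yielding the disjoint decomposition $\{h:s^h\in T_1^F\}=\bigsqcup_{n\in\mathcal{N}}cT^Fn$. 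Because $T$ is abelian, $s^{ctn}=(s^c)^n$ is independent of $t\in T^F$, so $\theta(s^h)=\theta((s^c)^n)=({}^w\theta)(s^c)$ for $w\in W(T)^F$ the class of $n$. Summing over the $|T^F|$ choices of $t$ cancels the prefactor, producing $\sum_{w\in W(T)^F}({}^w\theta)(s^c)$, as required. The main subtlety, which I would address first, is the promotion from $\mathrm{Stab}_G(s^h)^o$ to the full $\mathrm{Stab}_G(s^h)=T$ in the Weil-restricted setting (needed so that $\mathrm{Stab}_G(s)^F=({}^cT)^F$, hence so that the coset decomposition above is exactly right); the remainder is routine bookkeeping with the Weyl group and the abelianness of $T$.
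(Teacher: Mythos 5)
Your overall strategy is exactly the paper's: plug $g=s$, $u=1$ into Theorem~\ref{theorem: character formula 2}, use regularity to identify $\mathrm{Stab}_G(s)^o$ with a conjugate of $T$, observe that the Green function on the torus is the characteristic function at the identity so the $\tau$-sum collapses, and then rewrite the remaining $h$-sum as a Weyl-group sum. The differences are cosmetic (you repeat the root-subgroup Iwahori analysis where the paper uses a one-line observation that a conjugate of $T$ containing $s^c$ must equal $T$; you work with $s$ directly where the paper works with $s^c$; you parametrise the index set by $T^F$-cosets where the paper writes $N_G(T_1)=N_{G_1}(T_1)T^1$), so this is essentially the same proof.

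However, the point you flag as ``the main subtlety'' --- that $\mathrm{Stab}_G(s)$ equals the full torus ${}^cT$, and the associated ``unique $n$'' claim --- is not merely a subtlety: it is false in general. The centraliser of a regular semisimple element need not be connected when $\mathbf{G}$ is not simply connected (already for $\mathrm{PGL}_2$ at level $r=1$ the centraliser of $\mathrm{diag}(1,-1)$ contains a Weyl element), and Remark~\ref{remark: stabiliser of ss element} only controls $\mathrm{Stab}_G(s)^o$. Consequently, if $\mathrm{Stab}_{W(T_1)^F}(s^c)\neq\{1\}$, then the $n$ with $s^h=(s^c)^n$ is not unique and $\mathrm{Stab}_G(s)^F\supsetneq({}^cT)^F$, so your stated justification for the disjoint decomposition breaks. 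Fortunately the decomposition $\{h\in G^F: s^h\in T_1^F\}=\bigsqcup_{n\in\mathcal{N}}cT^Fn$ is still correct: by regularity $s^h\in T_1\subseteq T$ forces ${}^{h^{-1}c}T_1=T_1$ and ${}^{h^{-1}c}T=T$, so the index set equals $cN_G(T)^F=cN_G(T_1)^F$, which is the disjoint union of the cosets $cT^Fn$ by Lang--Steinberg applied to the connected group $T$, with no reference to $\mathrm{Stab}_G(s)$ at all. This is in effect what the paper does --- its index set is $N_G(T_1)^F$ from the start --- and it cleanly avoids the disconnectedness issue. With that repair your argument goes through; the Green-function computation and the collapse to $\sum_{w\in W(T)^F}({}^w\theta)(s^c)$ are fine.
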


\begin{proof}
From the character formula we only need to deal with the case that the intersection of $T_1^F$ and the conjugacy class of $s$ (in $G^F$) is non-empty. 

\vspace{2mm} By regularity, the only $G$-conjugation of $T_1$ containing $s^c$ is $T_1$ itself; see e.g.\ \cite[Section~2.3]{Humphreys_1995_Conj_ss_algGp}. (Also note that the only conjugation of $T$ containing $s^c$ is $T$, as otherwise $\mathrm{Stab}_G(s^c)^o$ is not isomorphic to $T$.) So the formula in Theorem~\ref{theorem: character formula 2} can be simplified as
\begin{equation}\label{temp 3}
\mathfrak{R}_{T,U,b}^{\theta}(s)=\mathfrak{R}_{T,U,b}^{\theta}(s^c)=\frac{1}{|T^F|} \cdot  \sum_{ h\in N_G(T_1)^F  } \sum_{\tau\in (T^1)^F}         {\theta}(s^{c\cdot h}\cdot \tau^{h})\cdot   Q_{{T},\{1\},b}^{T}(1,\tau^{-1}).
\end{equation}
In this summation, note that the function 
$$Q_{{T},\{1\},b}^{T}(1,\tau^{-1})=\frac{1}{|T^F|}\mathrm{Tr}\left((1,\tau^{-1})  \mid\overline{\mathbb{Q}}_{\ell}[T^F]  \right)$$ 
on $(T^1)^F$ is the characteristic function at the identity element, and note that $N_G(T_1)=N_{G_1}(T_1)T^1$ by an Iwahori decomposition argument as in the proof of Theorem~\ref{theorem: character formula}, thus
\begin{equation*}
\eqref{temp 3}=\sum_{w\in W(T)^F}(^w\theta)(s^c),
\end{equation*}
as desired.
\end{proof}

\begin{coro}\label{coro: evaluation independence at regular ss elements}
The values of $\mathfrak{R}_{T,U,b}^{\theta}$ at regular semisimple elements are independent of the choices of $U$ and $b$. 
\end{coro}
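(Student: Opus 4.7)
The plan is to observe that this corollary is essentially a direct readout of Theorem~\ref{thm: evaluation at regular semisimple elements}: once that theorem is in hand, both the vanishing case and the nonvanishing case give expressions in which neither $U$ nor $b$ occurs on the right-hand side, so independence is immediate. In more detail, I would first note that the hypothesis ``$s$ is regular semisimple in $G^F$'' is a condition purely on $s$ and $G$, and the dichotomy ``the $G^F$-conjugacy class of $s$ meets $T_1$ or not'' likewise depends only on $T$ (via $T_1$) and $s$, not on $U$ or $b$.

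In the first case, Theorem~\ref{thm: evaluation at regular semisimple elements} says $\mathfrak{R}^{\theta}_{T,U,b}(s)=0$; the zero value is trivially independent of $U$ and $b$. In the second case, if $s^c\in T_1$ for some $c\in G^F$, the formula reads
\[
\mathfrak{R}^{\theta}_{T,U,b}(s)=\sum_{w\in W(T)^F}({}^w\theta)(s^c),
\]
and the right-hand side depends only on the pair $(T,\theta)$ (together with $s$ and a conjugator $c$ whose existence, but not choice, matters, since ${}^w\theta(s^c)$ is constant on the $N_G(T_1)^F$-orbit of $c$). No $U$, no $b$; so the value is again independent of these choices.

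Thus the only genuine step is to invoke Theorem~\ref{thm: evaluation at regular semisimple elements} and read off independence from the explicit formula. There is no real obstacle beyond that: the miraculous cancellation of the $U$- and $b$-dependence already took place inside the proof of the theorem, where the Green function $Q^T_{T,\{1\},b}(1,\tau^{-1})$ collapsed to the characteristic function of the identity on $(T^1)^F$ and the $b$-dependent higher Deligne--Lusztig variety degenerated to a single point contribution. The corollary simply records the consequence of that collapse.
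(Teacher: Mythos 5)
Your proposal is correct and takes exactly the same route as the paper, which simply observes that the statement follows immediately from Theorem~\ref{thm: evaluation at regular semisimple elements}. Your elaboration on why the right-hand side of that theorem is visibly $U$- and $b$-free is accurate but not strictly needed.
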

\begin{proof}
This follows immediately from Theorem~\ref{thm: evaluation at regular semisimple elements}.
\end{proof}

\begin{remark}\label{remark: Gerardin}
In \cite{Gerardin1975SeriesDiscretes}, by purely algebraic methods, whenever $(G,T,\theta)$ satisfying certain conditions (namely, $\mathbb{G}$ is defined from an unramified split group with the derived subgroup being simply connected, $T$ is special in the sense of \cite[3.3.9]{Gerardin1975SeriesDiscretes}, and $\theta$ is regular and in general position in the sense of \cite{Lusztig2004RepsFinRings}; see \cite[Remark~3.4]{ChenStasinski_2016_algebraisation}), G\'erardin constructed an irreducible representation $R(\theta)$ of $G^F$ of the form: If $r$ is even, then $R(\theta)=\mathrm{Ind}_{(TG^{r/2})^F}^{G^F}\widetilde{\theta}$, where $\widetilde{\theta}$ is the trivial lift of $\theta$; if $r$ is odd, then $R(\theta)=\mathrm{Ind}_{(TG^{(r-1)/2})^F}^{G^F}\widetilde{\theta}$, where $\widetilde{\theta}$ is some irreducible representation of $(TG^{(r-1)/2})^F$ of dimension $q^{\# \Phi/2}$. Lusztig suggested in \cite{Lusztig2004RepsFinRings} that these algebraically constructed representations are likely to be the same as the geometrically constructed representations $R_{T,U}^{\theta}$; when $r$ is even, this was proved in \cite{ZheChen_PhDthesis} for $\mathrm{GL}_n$ and in \cite{ChenStasinski_2016_algebraisation} in general. 
\end{remark}

However, even when $r$ is even, if one does not impose any restrictions on $\theta$, then, as can be seen from Lusztig's computations \cite[Section~3]{Lusztig2004RepsFinRings}, it can happen that $R_{T,U}^{\theta}$ is not isomorphic to $\mathrm{Ind}_{(TG^{r/2})^F}^{G^F}\widetilde{\theta}$ for some $\theta$; in any case, we show that, even though the characters of these two representations may not be identical, they always agree at regular semisimple elements, without any conditions on $\theta$:

\begin{coro}\label{coro: coincidence at even levels}
When $r$ is even, $\mathfrak{R}_{T,U}^{\theta}(s)=\mathrm{Tr}(s, \mathrm{Ind}_{(TG^{r/2})^F}^{G^F}\widetilde{\theta})$ for any regular semisimple element $s\in G^F$.
\end{coro}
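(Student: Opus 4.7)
The plan is to compute both sides directly: the left side via Theorem~\ref{thm: evaluation at regular semisimple elements}, and the right side via the Frobenius formula for induced characters, then compare case by case. A preliminary observation simplifies the setup: any regular semisimple $s \in G^F$ has trivial unipotent Jordan part in the algebraic group $G$, so $s \in G_1^F$ and is regular semisimple in $G_1$; in particular $\rho_{r,i}(s) = s$ for every $i$.

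Setting $H := (TG^{r/2})^F$, the key observation is $TG^{r/2} = \rho_{r,r/2}^{-1}(T_{r/2})$, so
\[
s^g \in H \iff s^{\rho_{r,r/2}(g)} \in T_{r/2}.
\]
Since $s$ is regular, its centralizer in $G_{r/2}$ is the Weil restriction of the unique maximal torus $T_s \subseteq G_1$ containing $s$ (see Remark~\ref{remark: stabiliser of ss element}); hence a rational element $\rho_{r,r/2}(g)$ conjugating $s$ into $T_{r/2}$ exists if and only if $T_s$ and $T_1$ are $G_1^F$-conjugate, equivalently $s$ is $G^F$-conjugate to an element of $T_1$. If the $G^F$-conjugacy class of $s$ does not meet $T_1$, then no $g$ satisfies $s^g \in H$, giving $\mathrm{Tr}(s,\mathrm{Ind}_H^{G^F}\widetilde{\theta}) = 0$, which agrees with $\mathfrak{R}_{T,U}^\theta(s) = 0$ from Theorem~\ref{thm: evaluation at regular semisimple elements}.

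In the remaining case, since both sides are class functions on $G^F$, I may assume $s \in T_1^F$. Then the condition $s^{\rho_{r,r/2}(g)} \in T_{r/2}$ is equivalent to $\rho_{r,r/2}(g) \in N_{G_{r/2}}(T_{r/2})^F$ (by regularity of $s$ in $T_{r/2}$), and using $W(T) \cong N_{G_{r/2}}(T_{r/2})/T_{r/2}$ (Section~\ref{section:Intro}), the set $\{g \in G^F : s^g \in H\}$ decomposes into $|W(T)^F|$ right cosets of $H$, indexed by Weyl elements. For representatives $n_w \in N_G(T)^F$ one has $s^{n_w} = s^w \in T^F \subseteq H$; since $\widetilde{\theta}$ is a character of $H$, the contribution of each coset is $\widetilde{\theta}(s^w) = \theta(s^w)$, so the Frobenius formula gives
\[
\mathrm{Tr}(s, \mathrm{Ind}_H^{G^F}\widetilde{\theta}) = \sum_{w \in W(T)^F} \theta(s^w) = \sum_{w \in W(T)^F} (^w\theta)(s) = \mathfrak{R}_{T,U}^\theta(s)
\]
by Theorem~\ref{thm: evaluation at regular semisimple elements}.

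The main obstacle I expect is the vanishing step: rigorously verifying that $s^g \in H$ really forces $s$ itself (not merely its reduction modulo $\pi$) to be $G^F$-conjugate to $T_1$. This requires combining the Weil-restriction description of centralizers of semisimple elements (Remark~\ref{remark: stabiliser of ss element}) with the Lang--Steinberg classification of $F$-stable maximal tori of $G_1$ to translate conjugacy in $G_{r/2}$ back down to conjugacy in $G_1$, and then to lift the resulting $G_1^F$-conjugator to $G^F$.
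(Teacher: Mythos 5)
Your approach is genuinely different from the paper's. The paper's proof is two lines: it invokes the algebraisation theorem of Chen--Stasinski to identify $\mathrm{Ind}_{(TG^{r/2})^F}^{G^F}\widetilde{\theta}$ with $R_{T,U,r/2}^{\theta}$, and then applies Corollary~\ref{coro: evaluation independence at regular ss elements} (independence of regular-semisimple values under change of $b$). You instead evaluate the induced character directly by the Frobenius formula and compare with Theorem~\ref{thm: evaluation at regular semisimple elements}. If carried out correctly, your route would be more self-contained (it would not use the Chen--Stasinski identification), at the cost of redoing by hand a coset count that the paper outsources to that reference.

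However, your ``preliminary observation'' is wrong, and it matters. You claim that a regular semisimple $s\in G^F$ lies in $G_1^F$ because its unipotent Jordan part is trivial. This confuses the Jordan decomposition with the semidirect decomposition $G = G_1\ltimes G^1$: the $G_1$-factor of $g$ is not its semisimple part. Semisimple elements of $G$ are conjugates of elements of $T_1$, but the conjugating element can lie outside $G_1$, so there are plenty of semisimple elements not in $G_1$ --- the paper says so explicitly in Remark~\ref{remark: caution on Weil restriction}, and Theorem~\ref{thm: evaluation at regular semisimple elements} pointedly allows $s\in G\setminus G_1$. Consequently $\rho_{r,i}(s)=s$ fails in general, and the equivalence $s^g\in H\iff s^{\rho_{r,r/2}(g)}\in T_{r/2}$ as you wrote it (with $s$, not $\rho_{r,r/2}(s)$, on the right) is unjustified. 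In the non-vanishing case you can repair this by conjugating into $T_1^F$ first (which you do), but your argument for the vanishing case leans on $s\in G_1$ directly --- you invoke Remark~\ref{remark: stabiliser of ss element}, which is stated only for $s\in T_1$, precisely because, as Remark~\ref{remark: caution on Weil restriction} warns, the Weil-restriction description of $\mathrm{Stab}_G(s)^o$ can fail for semisimple $s$ outside $G_1$. You correctly flag the vanishing step as the main obstacle; it is, and the fix requires an argument that if a rational semisimple element lands in $(TG^{r/2})^F$ then it is $(TG^{r/2})^F$-conjugate into $T_1^F$ (say, via the fact that $TG^{r/2}$ is connected solvable with $N_{TG^{r/2}}(T_1)/T_1\cong T^1$ connected, so all its rational maximal tori are rationally conjugate), rather than the appeal to $s\in G_1$ that you currently make.
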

\begin{proof}
Let $b=r/2$, then $R_{T,U,b}^{\theta}\cong \mathrm{Ind}_{(TG^{r/2})^F}^{G^F}\widetilde{\theta}$ according to \cite[3.3]{ChenStasinski_2016_algebraisation}. So the assertion follows from  Corollary~\ref{coro: evaluation independence at regular ss elements}.
\end{proof}

\begin{remark}\label{remark: coincidence at odd levels}
If $(G,T,\theta)$ satisfies G\'erardin's conditions mentioned in Remark~\ref{remark: Gerardin}, then the above equality also holds for $r$ odd, i.e.\ $\mathfrak{R}_{T,U}^{\theta}(s)=\mathrm{Tr}(s, R(\theta))$ for any regular semisimple element $s\in T^F$ and any $r$: Indeed, when $r$ is odd, according to \cite[4.3.4]{Gerardin1975SeriesDiscretes}, the character value of $R(\theta)$ at $s$ is $\frac{1}{|(TG^{(r-1)/2})^F|}\sum_{\{  h\in G^F\mid s^h\in (TG^{(r-1)/2})^F \}}\theta(s^h)=\sum_{w\in W(T)^F} \theta(s^w)$.
\end{remark}

Theorem~\ref{thm: evaluation at regular semisimple elements} also implies an agreement at regular semisimple elements in another direction: It is easy to see that the image of a reduction map on a Deligne--Lusztig variety is a Deligne--Lusztig variety at a lower level (see e.g.\ \cite[Lemma~3.3.3]{ZheChen_PhDthesis}). In particular, the image of $L^{-1}(FU)$ along $\rho_{r,1}$ is a classical Deligne--Lusztig variety; let $\mathfrak{R}_{T_1}^{\theta_1}$ be its associated Deligne--Lusztig character, where $\theta_1$ denotes the restriction of $\theta$ to $T_1^F$.

\begin{coro}\label{coro: coincidence to level one}
Let $s$ be a regular semisimple element of $G_1^F$, then $\mathfrak{R}_{T,U}^{\theta}(s)=\mathfrak{R}_{T_1}^{\theta_1}(s)$.
\end{coro}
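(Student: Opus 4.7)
The plan is to reduce both sides to the same sum over the Weyl group and match term by term, using Theorem~\ref{thm: evaluation at regular semisimple elements} on the higher side and the classical Deligne--Lusztig character formula at regular semisimple elements on the level-one side.

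First, I would verify that $s$, viewed in $G^F$ via the splitting $G = G_1 \ltimes G^1$, is also regular semisimple there. Regularity in $G_1$ gives $\mathrm{Stab}_{G_1}(s)^o \cong T_1$, so by Remark~\ref{remark: stabiliser of ss element} the stabiliser $\mathrm{Stab}_G(s)^o$ is the Weil restriction of $T_1$, which is $T$; hence $s$ is regular semisimple in $G^F$ and Theorem~\ref{thm: evaluation at regular semisimple elements} is applicable. I would also note that the two vanishing conditions agree: if $s^c \in T_1$ for some $c \in G^F$, then applying $\rho_{r,1}$ yields $s^{\rho_{r,1}(c)} = s^c \in T_1$ with $\rho_{r,1}(c) \in G_1^F$, so the existence of a $G^F$-conjugate of $s$ in $T_1$ is equivalent to the existence of a $G_1^F$-conjugate in $T_1$. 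Thus in the vanishing case both sides of the asserted identity are zero.

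In the remaining case, pick $c \in G_1^F \subseteq G^F$ with $s^c \in T_1^F$. Specialising Theorem~\ref{thm: evaluation at regular semisimple elements} to $b = r$ gives
$$\mathfrak{R}_{T,U}^\theta(s) = \sum_{w \in W(T)^F}({^w\theta})(s^c),$$
while the classical Deligne--Lusztig character formula at regular semisimple elements (see for instance \cite[7.6]{Carter1993FiGrLieTy}) yields
$$\mathfrak{R}_{T_1}^{\theta_1}(s) = \sum_{w \in W(T_1)^F}({^w\theta_1})(s^c).$$
Using the canonical $F$-equivariant isomorphism $W(T) \cong W(T_1)$ recorded in the introduction, together with $\theta_1 = \theta|_{T_1^F}$ and $s^c \in T_1^F$, the two sums coincide term by term, which proves the corollary.

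The main (and in fact only) delicate point is checking that the identification $W(T)^F \cong W(T_1)^F$ is compatible with the conjugation action on $\theta|_{T_1^F}$ and with evaluation at $s^c$. However this compatibility follows from the identity $N_G(T_1) = N_{G_1}(T_1) T^1$ already used in the proof of Theorem~\ref{thm: evaluation at regular semisimple elements}, so no additional ingredient is required and I do not expect any serious obstacle.
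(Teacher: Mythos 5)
Your proof is correct and takes essentially the same route as the paper: the paper's proof consists of the single line ``This follows immediately from Theorem~\ref{thm: evaluation at regular semisimple elements},'' and your write-up simply fills in the intermediate steps (lifting regularity from $G_1$ to $G$ via Remark~\ref{remark: stabiliser of ss element}, matching the vanishing conditions, and identifying the two Weyl-group sums via $W(T)\cong W(T_1)$), all of which are the details the paper is implicitly relying on. The only cosmetic remark is that citing ``$b=r$'' is unnecessary since Theorem~\ref{thm: evaluation at regular semisimple elements} gives the same answer for every $b$; the level-one formula can equally be read off as the $r=1$ instance of that same theorem rather than from \cite[7.6]{Carter1993FiGrLieTy}.
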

\begin{proof}
This follows immediately from Theorem~\ref{thm: evaluation at regular semisimple elements}.
\end{proof}

\begin{remark}
Note that the assertions in Theorem~\ref{thm: evaluation at regular semisimple elements}, Corollary~\ref{coro: evaluation independence at regular ss elements} Corollary~\ref{coro: coincidence at even levels}, and Corollary~\ref{coro: coincidence to level one} do not hold for every semisimple $s$, in general; for example, they do not hold for $s=1$ for some $\theta$, as can be seen from \cite{Lusztig2004RepsFinRings}.
\end{remark}

\section{Gamma functions}\label{sec: gamma}

Let $\chi$ be a character of $G^F$ and let $\psi$ be a $\overline{\mathbb{Q}}_{\ell}$-valued function on $G^F$. Then there is a natural notion of gamma function on $G^F$:
$$\gamma_{G^F}(\chi,\psi):=\int_{G^F} \chi(-)\psi(-)=\frac{1}{|G^F|}\sum_{g\in G^F} \chi(g)\psi(g).$$ 
When $r=1$, these functions (up to a constant) were studied by Braverman and Kazhdan in \cite{Braverman_Kazhdan_2000_GammaFunc} and \cite{Braverman_Kazhdan_2000_gammaSh}, in which they showed that (see \cite[5.3]{Braverman_Kazhdan_2000_gammaSh}), if $\chi$ is an irreducible constituent of a Deligne--Lusztig character, and if $\psi$ is a class function defined from the trace map, then the gamma function is equal to the gamma function of the corresponding finite torus. Their method is geometric and based on deep properties of character sheaves. Here we show that the Green function summation formula (i.e.\ Proposition~\ref{prop: summation formula}) implies that the same assertion holds for higher Deligne--Lusztig characters with $p$-constant parameters, for any $r\geq1$:

\begin{prop}\label{prop: gamma function}
Let $\psi$ be a class function on $G^F$. Suppose that both $\chi$ and $\theta$ are $p$-constant, then 
$$\gamma_{G^F}\left(\mathfrak{R}_{T,U,b}^{\theta},\psi\right)=\gamma_{(T_1)^F}(\theta,\psi|_{(T_1)^F}).$$
\end{prop}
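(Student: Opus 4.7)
The strategy is to unfold the left-hand side via Theorem~\ref{theorem: character formula 2}, exploit the two $p$-constancy hypotheses to factor $\theta$ and $\psi$ out of the sums over the unipotent data $u$ and $\tau$, invoke the summation formula of Proposition~\ref{prop: summation formula} to evaluate the inner double sum, and finish with a reparametrization of the outer sum.

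First, I would partition $G^F$ via the Jordan decomposition $g=su$ (bijectively with $s\in G^F$ semisimple and $u\in(\mathcal{U}_{\mathrm{Stab}_{G}(s)^o})^F$, as in the proof of Proposition~\ref{prop: summation formula}), use $\psi(g)=\psi(s)$ from the $p$-constancy of $\psi$ to pull $\psi$ outside the $u$-sum, and substitute Theorem~\ref{theorem: character formula 2}. This rewrites $\gamma_{G^F}(\mathfrak{R}_{T,U,b}^\theta,\psi)$ as
\begin{equation*}
\frac{1}{|G^F|}\sum_{s}\frac{\psi(s)}{|(\mathrm{Stab}_{G}(s)^o)^F|}\sum_{\substack{h\in G^F\\ s\in{^h(T_1)^F}}}\sum_{\tau\in{^h(T^1)^F}}\theta(s^h\tau^h)\sum_{u}Q^{\mathrm{Stab}_{G}(s)^o}_{{^hT},\,{^hU}\cap\mathrm{Stab}_{G}(s)^o,\,b}(u,\tau^{-1}).
\end{equation*}

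Second, since $s^h\in T_1$ is semisimple and $\tau^h\in T^1$ is unipotent in the algebraic group $G$, and they commute in the abelian group $T$, the product $s^h\tau^h$ is its own Jordan decomposition; the $p$-constancy of $\theta$ thus yields $\theta(s^h\tau^h)=\theta(s^h)$, allowing $\theta$ to be pulled outside the $\tau$-sum. Whenever $s\in{^h(T_1)^F}$, the group $\mathrm{Stab}_{G}(s)^o$ is (by Remark~\ref{remark: stabiliser of ss element}) the Weil restriction of a connected reductive group with ${^hT}$ as a maximal torus and ${^hU}\cap\mathrm{Stab}_{G}(s)^o$ as the unipotent radical of a Borel, so Proposition~\ref{prop: summation formula} applies and gives
\begin{equation*}
\sum_u\sum_\tau Q^{\mathrm{Stab}_{G}(s)^o}_{{^hT},\,{^hU}\cap\mathrm{Stab}_{G}(s)^o,\,b}(u,\tau^{-1})=\frac{|(\mathrm{Stab}_{G}(s)^o)^F|}{|T_1^F|},
\end{equation*}
which cancels the reciprocal stabilizer factor. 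What remains is
\begin{equation*}
\gamma_{G^F}\bigl(\mathfrak{R}_{T,U,b}^\theta,\psi\bigr)=\frac{1}{|G^F|\cdot|T_1^F|}\sum_{\substack{s\in G^F\text{ semisimple}\\ h\in G^F,\,s^h\in T_1^F}}\psi(s)\,\theta(s^h).
\end{equation*}

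Finally, the map $(s,h)\mapsto(h,\,t:=s^h)$ is a bijection from the above index set onto $G^F\times T_1^F$ (with inverse $(h,t)\mapsto(hth^{-1},h)$), and the class-function property of $\psi$ gives $\psi(hth^{-1})=\psi(t)$; the double sum therefore collapses to $|G^F|\sum_{t\in T_1^F}\theta(t)\psi(t)$, yielding $\gamma_{(T_1)^F}(\theta,\psi|_{(T_1)^F})$, as desired.

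The main obstacle I anticipate is the second step: making precise the application of Proposition~\ref{prop: summation formula} to $\mathrm{Stab}_{G}(s)^o$ with torus ${^hT}$ and unipotent radical ${^hU}\cap\mathrm{Stab}_{G}(s)^o$. This rests on Remark~\ref{remark: stabiliser of ss element} to view $\mathrm{Stab}_{G}(s)^o$ as a Weil restriction of a connected reductive group, and requires checking that ${^hU}\cap\mathrm{Stab}_{G}(s)^o$ corresponds to the unipotent radical of a Borel of $\mathrm{Stab}_{G}(s)^o$ containing ${^hT}$, so that the Green function is of the form considered in Definition~\ref{definition: Green function} and $|({^hT_1})^F|=|T_1^F|$ appears in the denominator.
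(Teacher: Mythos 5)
Your proposal is correct and follows essentially the same route as the paper's own proof: expand via Theorem~\ref{theorem: character formula 2} with the Jordan-decomposition parametrization from the proof of Proposition~\ref{prop: summation formula}, use the $p$-constancy of $\psi$ and of $\theta$ (the latter together with the observation that $s^h\tau^h$ is already a Jordan decomposition in $T^F\cong(T_1)^F\times(T^1)^F$) to pull both characters out of the inner sums, invoke Proposition~\ref{prop: summation formula} to evaluate $\sum_u\sum_\tau Q$ as $|(\mathrm{Stab}_G(s)^o)^F|/|T_1^F|$, and collapse the remaining double sum via the bijection $(s,h)\leftrightarrow(h,s^h)$ and the class-function property of $\psi$. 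The only cosmetic difference is that you make the final reparametrization explicit and flag the verification that $(\mathrm{Stab}_G(s)^o,{^hT},{^hU}\cap\mathrm{Stab}_G(s)^o)$ is of the form to which Proposition~\ref{prop: summation formula} applies (via Remark~\ref{remark: stabiliser of ss element}), both of which the paper leaves implicit.
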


\begin{proof}
From Theorem~\ref{theorem: character formula 2} and the argument of Proposition~\ref{prop: summation formula} we see
\begin{equation*}
\begin{split}
\gamma_{G^F}\left(\mathfrak{R}_{T,U,b}^{\theta},\psi\right)=\frac{1}{|G^F|} & \sum_{ \substack{  s\in G^F  \\  \text{semisimple}  }  }  \frac{1}{|(\mathrm{Stab}_{G}(s)^o)^F|} \sum_{ \substack{  u\in (\mathrm{Stab}_{G}(s)^o)^F  \\  \text{unipotent}  }  } \psi(su) \\  
&\times \sum_{   \left\lbrace\substack{   h\in G^F\ \text{s.t.} \\  s\in {^h(T_1)^F}  } \right\rbrace   } \sum_{\tau\in {^h(T^1)^F} }         {\theta}(s^h\cdot \tau^h)\cdot   Q_{{^hT},{^hU}\cap\mathrm{Stab}_G(s)^o,b}^{\mathrm{Stab}_{G}(s)^o}(u,\tau^{-1}).
\end{split}
\end{equation*}
From the $p$-constant property we can re-write the above formula as
\begin{equation}\label{temp 8}
\begin{split}
\gamma_{G^F}\left(\mathfrak{R}_{T,U,b}^{\theta},\psi\right)=\frac{1}{|G^F|} & \sum_{ \substack{  s\in G^F  \\  \text{semisimple}  }  }  \sum_{   \left\lbrace\substack{   h\in G^F\ \text{s.t.} \\  s\in {^h(T_1)^F}  } \right\rbrace   }  \frac{1}{|(\mathrm{Stab}_{G}(s)^o)^F|} \cdot \psi(s) \cdot   {\theta}(s^h)  \\  
&\times    \sum_{ \substack{  u\in (\mathrm{Stab}_{G}(s)^o)^F  \\  \text{unipotent}  }  }    \sum_{\tau\in {^h(T^1)^F} }     Q_{{^hT},{^hU}\cap\mathrm{Stab}_G(s)^o,b}^{\mathrm{Stab}_{G}(s)^o}(u,\tau^{-1}).
\end{split}
\end{equation}
By Proposition~\ref{prop: summation formula} we see
\begin{equation*}
\begin{split}
\eqref{temp 8}
&=\frac{1}{|G^F|} \sum_{ \substack{  s\in G^F  \\  \text{semisimple}  }  }  \sum_{   \left\lbrace\substack{   h\in G^F\ \text{s.t.} \\  s\in {^h(T_1)^F}  } \right\rbrace   } \frac{1}{|(\mathrm{Stab}_{G}(s)^o)^F|} \cdot \psi(s) \cdot   {\theta}(s^h) \cdot \frac{|(\mathrm{Stab}_G(s)^o)^F|}{|T_1^F|}\\
&=\frac{1}{|T_1^F|} \cdot \frac{1}{|G^F|}\sum_{ \substack{  s\in G^F  \\  \text{semisimple}  }  }  \sum_{   \left\lbrace\substack{   h\in G^F\ \text{s.t.} \\  s\in {^h(T_1)^F}  } \right\rbrace   }  \psi(s^h) \cdot   {\theta}(s^h) \\
&=\gamma_{(T_1)^F}(\theta,\psi|_{(T_1)^F}),
\end{split}
\end{equation*}
as desired.
\end{proof}

\bibliographystyle{alpha}
\bibliography{zchenrefs}

\end{document}